\pgfplotsset{compat=1.18}
\newcommand{\stkout}[1]{\ifmmode\text{\sout{\ensuremath{#1}}}\else\sout{#1}\fi}
\newcommand{\prob}{\mathbf{P}}
\newcommand{\erw}{\mathds{E}}
\newcommand{\e}{\mathrm{e}}
\newcommand{\cH}{\mathcal{H}}
\newcommand{\WF}{\mathrm{WF}}
\newcommand{\spec}{\mathrm{Spec}}
\newcommand{\Ima}{\mathrm{Im\,}}
\newcommand{\Rea}{\mathrm{Re\,}}
\newcommand{\mO}{\mathcal{O}}
\newcommand{\C}{\mathds{C}}
\newcommand{\R}{\mathds{R}}
\newcommand{\T}{\mathds{T}}
\newcommand{\N}{\mathds{N}}
\newcommand{\Z}{\mathds{Z}}
\newcommand{\wt}{\widetilde}
\newcommand{\Vol}{\mathrm{Vol}}
\newcommand{\cS}{\mathcal{S}}
\renewcommand{\geq}{\geqslant}
\renewcommand{\leq}{\leqslant}
\newtheorem{thm}{Theorem}
\newtheorem{defn}[thm]{Definition}
\newtheorem{rem}[thm]{Remark}
\newtheorem{ex}[thm]{Example}
\numberwithin{equation}{section}
\numberwithin{thm}{section}
\begin{document}
\title[Disordered non-selfadjoint operators ]
{Pseudospectra and eigenvalue asymptotics for disordered non-selfadjoint operators 
in the semiclassical limit}
\author{Martin Vogel}
\address[Martin Vogel]{Institut de Recherche Math{\'e}matique Avanc{\'e}e - UMR 7501, 
Universit{\'e} de Strasbourg et CNRS, 7 rue René-Descartes, 67084 Strasbourg Cedex, France.}
\email{vogel@math.unistra.fr}
%\date{\today}\spec_\varepsilon(X_0+\delta Q -z)
%\keywords{Spectral theory; non-self-adjoint operators; random perturbations}
%\subjclass[2010]{47A10, 47B80, 47H40, 47A55, 60B20}
%
%
\maketitle
\begin{abstract} 
The purpose of this note is to review some recent results concerning the pseudospectra 
and the eigenvalues asymptotics of non-selfadjoint semiclassical pseudo-differential 
operators subject to small random perturbations. 
\end{abstract}
\section{Introduction}
Spectral theory of non-selfadjoint operators acting on a Hilbert 
space is an old and highly developed subject. Non-selfadjoint operators 
appear naturally in large variety of modern problems. For instance in 
quantum mechanics, the study of scattering systems naturally leads to 
the notion of quantum resonances. These can be described as the complex 
valued poles of the meromorphic continuation of the scattering matrix 
or of the cut-off resolvent of the Hamiltonian to the non-physical sheet of the 
complex plane. Alternatively, by means of a complex deformation of the 
initial Hamiltonian, these resonances can be described as the genuine 
complex valued eigenvalues of a non-selfadjoint operator 
\cite{AgCo71,BaCo71,SjZw91}. We refer the reader to \cite{DyZw19} for an 
in-depth discussion of the mathematics of scattering poles. 
Another subject in quantum mechanics is the study of a \emph{small} system 
which is linked to a \emph{larger} environment. The effective dynamics of 
the small systems are governed by a non-selfadjoint operator: the Lindbladian 
\cite{Lind76}.
\par 
A major difficulty in the spectral analysis of non-selfadjoint operators 
is the possible strong \emph{spectral instability} of their spectrum with respect 
to small perturbations. This phenomenon, sometimes called \emph{pseudospectral 
effect}, was initially considered a drawback as it can be at the origin 
of immense numerical errors, see \cite{TrEm05} and the references therein. 
However, a recent line of research has also shown that it can be at the 
pseudospectral effect can lead to new insights into the spectral distribution 
of non-selfadjoint operators subject to small generic perturbations. 
\section{Spectral instability of non-selfadjoint operators}
We begin by recalling the definition of the \emph{pseudospectrum} 
of a linear operator, an important notion which quantifies its spectral 
instability. This notion seems to have originated in the second half 
of the 20th century in various contexts, see \cite{Tr97} for a 
historic overview. It quickly became an important notion in numerical 
analysis as it allows to quantify how much eigenvalues can \emph{spread out}  
under the influence of small perturbations, see \cite{Tr92,Tr97} 
and the book \cite{TrEm05}. We follow here the latter reference. 
\\
\par
Let $\cH$ be a complex Hilbert space (assumed separable for simplicity) 
with norm $\|\cdot\|$ and scalar product $(\cdot|\cdot)$. Let $P:\cH \to \cH$ 
be a closed densely defined linear operator, with resolvent 
set $\rho(P)$ and spectrum $\spec(P)=\C\backslash\rho(P)$. 
\begin{defn}\label{def:PseudoSpec}
For any $\varepsilon>0$, we define the $\varepsilon$-pseudospectrum of $P$ by 
\begin{equation}\label{C1:eq1}
   \spec_{\varepsilon}(P) := \spec(P)\cup 
   \{z\in\rho(P); \|(P-z)^{-1}\| > \varepsilon^{-1}\}.
\end{equation}
\end{defn}
We remark that some authors define the $\varepsilon$-pseudospectrum 
with a $\geq$ rather than a $>$. We, however, follow here \cite{TrEm05}. 
Note that with this choice of non-strict inequality the $\spec_{\varepsilon}(P)$ 
is an open set in $\C$. 
\par 
For $P$ selfadjoint (or even normal), the spectral theorem implies that 
\begin{equation}\label{C1:eq1.1}
	\spec_{\varepsilon}(P) \subset \spec(P)\cup D(0,\varepsilon).
 \end{equation}
For $P$ non-selfadjoint, the pseudospectrum of $P$ can be much 
larger, as illustrated by the following example. 
\begin{ex}\label{C2:exJordan}
	For $N\gg 1$ consider the Jordan block matrix 
	\begin{equation}\label{ex:JordanBlock}
		P_N = \begin{pmatrix}
		 0& 1& 0 & \dots & 0 \\
		 0 & 0 & 1 & \ddots& \vdots\\
		   \vdots & \ddots & \ddots & \ddots& 0\\
		   \vdots&\dots&\ddots &\ddots&1 \\
		   0&\dots&\dots&\dots&0 
	   \end{pmatrix}: \C^N \to \C^N.
	\end{equation}
The spectrum of $P_N$ is given by $\{0\}$. Consider the vector 
$e_+=(1,z,\dots,z^{N-1})$, $|z|\leq r <1$. Then,
\begin{equation*}
	\|(P_N-z)e_+\|=|z|^N = \mO\!\left(\e^{-N|\log r|}\right) \|e_+\|.
\end{equation*} 
So, Theorem \ref{Thm:PSpecC3} below shows that for any $\varepsilon>0$ and 
any $r\in ]0,1[$ we have that for $N>1$ sufficiently large
\begin{equation*}
	D(0,r)\subset \spec_\varepsilon(P_N).
\end{equation*}
\end{ex}
An immediate consequence of \eqref{C1:eq1} is the property that pseudospectra 
are nested. More precisely,
\begin{equation}\label{C1:eq1.0}
   \spec_{\varepsilon_2}(P) \subset \spec_{\varepsilon_1}(P), \quad 
   \varepsilon_1 > \varepsilon_2>0.
\end{equation}
The set \eqref{C1:eq1} describes a region of spectral instability of the 
operator $P$, since any point in the 
$\varepsilon$-pseudospectrum of $P$ lies in the spectrum of a certain 
$\varepsilon$-perturbation of $P$ \cite{TrEm05}. 
\begin{thm}
 Let $\varepsilon>0$. Then 
 \begin{equation}\label{C1:eq2}
   \spec_{\varepsilon}(P)= \bigcup_{\substack{Q\in\mathcal{B}(\cH,\cH) \\
              \lVert Q\rVert < 1}}
          \spec(P+\varepsilon Q).
\end{equation}
\end{thm}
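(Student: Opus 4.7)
The plan is to prove the two inclusions in \eqref{C1:eq2} separately, treating the case $z\in\spec(P)$ as trivial (take $Q=0$ on the right, and note that $\spec(P)\subset \spec_\varepsilon(P)$ by definition on the left). So in both directions I may assume $z\in\rho(P)$.

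\textbf{Inclusion $\supseteq$.} Suppose $z\in\spec(P+\varepsilon Q)$ for some $Q\in\cB(\cH,\cH)$ with $\|Q\|<1$, and $z\in\rho(P)$. I will use the factorisation
\begin{equation*}
  P+\varepsilon Q-z=(P-z)\bigl(\Id+\varepsilon(P-z)^{-1}Q\bigr).
\end{equation*}
Since $P-z$ is invertible but $P+\varepsilon Q-z$ is not, the bounded operator $\Id+\varepsilon(P-z)^{-1}Q$ fails to be invertible, so $-1\in\spec(\varepsilon(P-z)^{-1}Q)$. Hence the spectral radius bound gives $\|\varepsilon(P-z)^{-1}Q\|\geq 1$, and combined with $\|Q\|<1$ this yields $\|(P-z)^{-1}\|>\varepsilon^{-1}$, i.e. $z\in\spec_\varepsilon(P)$.

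\textbf{Inclusion $\subseteq$.} Suppose $z\in\spec_\varepsilon(P)\cap\rho(P)$, so that $\|(P-z)^{-1}\|>\varepsilon^{-1}$. By definition of the operator norm, I can choose $u\in\cH$ with $\|u\|=1$ such that $v\defeq (P-z)^{-1}u$ satisfies $\|v\|>\varepsilon^{-1}$. The plan is now to design a rank-one perturbation $Q$ that kills $v$ under $P+\varepsilon Q-z$. Concretely I set
\begin{equation*}
  Qw\defeq -\frac{(w\,|\,v)}{\varepsilon\,\|v\|^{2}}\,u,\qquad w\in\cH,
\end{equation*}
so that $Qv=-u/\varepsilon$ and $\|Q\|=(\varepsilon\|v\|)^{-1}<1$. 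Then
\begin{equation*}
  (P+\varepsilon Q-z)v=(P-z)v+\varepsilon Qv=u-u=0,
\end{equation*}
so $v\in\ker(P+\varepsilon Q-z)\setminus\{0\}$, and $z\in\spec(P+\varepsilon Q)$.

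The arguments are essentially soft, and the only subtlety I anticipate is the bookkeeping of strict versus non-strict inequalities: the convention \eqref{C1:eq1} uses $>$, and in the first inclusion I need the strict inequality $\|(P-z)^{-1}\|>\varepsilon^{-1}$, which is exactly what $\|Q\|<1$ (rather than $\|Q\|\leq 1$) delivers. Conversely, in the second inclusion, strict inequality in the pseudospectrum is precisely what allows me to select $u$ with $\|(P-z)^{-1}u\|>\varepsilon^{-1}$ and hence a perturbation of norm strictly less than $1$. If one had adopted the other convention (with $\geq$ and $\leq$) one would need to pass to a limit, which would be the only potentially delicate point.
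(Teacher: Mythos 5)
Your proof is correct, and it follows the standard argument that the paper simply cites (Trefethen--Embree, p.~31) rather than reproducing: a Neumann-series/spectral-radius bound for the inclusion $\supseteq$, and an explicit rank-one perturbation built from a near-maximiser of $(P-z)^{-1}$ for $\subseteq$. The one place you might add a line is the claim that invertibility of $P+\varepsilon Q-z$ (an unbounded operator on $\mathcal{D}(P)$) is equivalent to invertibility of the bounded operator $\Id+\varepsilon(P-z)^{-1}Q$ on $\cH$; this is true, but since $(P-z)^{-1}Q$ maps $\cH$ into $\mathcal{D}(P)$ one should observe that $Tw\in\mathcal{D}(P)$ forces $w\in\mathcal{D}(P)$, and that if $R=(P+\varepsilon Q-z)^{-1}$ exists then $\Id-\varepsilon RQ$ inverts $T$.
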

\begin{proof}
 See \cite[p. 31]{TrEm05}.
\end{proof}
A third, equivalent definition of the $\varepsilon$-pseudospectrum of 
$P$ is via the existence of approximate solutions to the eigenvalue 
problem $(P-z)u=0$. 
\begin{thm}\label{Thm:PSpecC3}
 Let $\varepsilon>0$ and $z\in\C$. Then the following statements are equivalent: 
 \begin{enumerate}
   \item $z \in \spec_{\varepsilon}(P)$;
   \item $z \in \spec(P)$ or there exists a $u_z\in \mathcal{D}(P)$ such 
   that $\|(P-z)u_z\| < \varepsilon \|u_z\|$, where $\mathcal{D}(P)$ denotes the 
   domain of $P$.
 \end{enumerate}
\end{thm}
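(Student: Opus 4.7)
The plan is to prove the two directions $(1)\Rightarrow(2)$ and $(2)\Rightarrow(1)$ separately, each by reducing to the definition of the resolvent norm and using that the supremum in $\|(P-z)^{-1}\| = \sup_{\|v\|=1}\|(P-z)^{-1}v\|$ can be approximated arbitrarily well by some unit vector. The key identification throughout is that, for $z \in \rho(P)$, any vector $u \in \mathcal{D}(P)$ can be written as $u = (P-z)^{-1}v$ with $v = (P-z)u$, so the quasimode inequality $\|(P-z)u\| < \varepsilon \|u\|$ is exactly the statement $\|(P-z)^{-1} v\| > \varepsilon^{-1} \|v\|$.

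For $(1)\Rightarrow(2)$, I would assume $z \in \spec_\varepsilon(P)$ and dispatch the case $z \in \spec(P)$ trivially. In the remaining case $z \in \rho(P)$ with $\|(P-z)^{-1}\| > \varepsilon^{-1}$, pick $\delta > 0$ so small that $\|(P-z)^{-1}\| > \varepsilon^{-1} + \delta$, and choose $v \in \cH$ with $\|v\|=1$ and $\|(P-z)^{-1} v\| > \varepsilon^{-1}$ by the definition of the operator norm. Set $u_z := (P-z)^{-1} v \in \mathcal{D}(P)$. Then $(P-z)u_z = v$ and
\begin{equation*}
\|(P-z)u_z\| = \|v\| = 1 < \varepsilon \cdot \varepsilon^{-1} < \varepsilon \|u_z\|,
\end{equation*}
which is the desired quasimode estimate.

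For $(2)\Rightarrow(1)$, again I dispose of $z \in \spec(P)$ immediately. Otherwise $z \in \rho(P)$ and there exists $u_z \in \mathcal{D}(P)$ with $\|(P-z)u_z\| < \varepsilon \|u_z\|$. In particular $u_z \neq 0$, hence $v := (P-z)u_z \neq 0$ (else $\|u_z\|\leq 0$), and $u_z = (P-z)^{-1}v$. The inequality rewrites as $\|v\| < \varepsilon \|(P-z)^{-1} v\|$, so $\|(P-z)^{-1}v\|/\|v\| > \varepsilon^{-1}$. Taking the supremum over nonzero vectors yields $\|(P-z)^{-1}\| > \varepsilon^{-1}$, so $z \in \spec_\varepsilon(P)$.

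There is no real obstacle here; the only point that deserves care is the strict inequality. Because Definition \ref{def:PseudoSpec} uses strict $>$, the existence of a single vector achieving the strict inequality suffices in one direction, while in the other direction one must use that $\|(P-z)^{-1}\| > \varepsilon^{-1}$ implies the \emph{strict} inequality can be realized on a unit vector, not merely approached in the limit. Both facts follow directly from the definition of the operator norm as a supremum, so the argument is short and elementary.
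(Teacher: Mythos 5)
Your proof is correct and is exactly the standard argument that the paper delegates to \cite{TrEm05}: the equivalence rests on the identification $u_z=(P-z)^{-1}v$ together with the fact that a supremum strictly exceeding $\varepsilon^{-1}$ must be strictly exceeded by some individual term, and you handle the strict inequalities on both directions properly. One cosmetic slip: in the displayed chain for $(1)\Rightarrow(2)$ the first ``$<$'' should be ``$=$'' (since $\varepsilon\cdot\varepsilon^{-1}=1$), and the unused $\delta$ can be dropped; the intended estimate $\varepsilon\|u_z\|>\varepsilon\cdot\varepsilon^{-1}=1=\|(P-z)u_z\|$ is clearly what you mean.
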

\begin{proof}
 See \cite[p. 31]{TrEm05}.
\end{proof}
Such a state $u_z$ is called an $\varepsilon$-quasimode, or simply a 
\textit{quasimode} of $P-z$. 
\section[Spectral instability of semiclassical $\Psi$dos]
{Spectral instability of semiclassical pseudo-differential 
operators}\label{sec:SpecInstaSemPseudo}
Although the notion of $\varepsilon$-pseudospectrum defined in 
Definition \ref{def:PseudoSpec} is valid in the setting of semiclassical 
pseudo-differential operators, we present here a somewhat different, yet 
still related notion, which is more adapted to semiclassical setting. 
Here ``semiclassical'' means that our operators depend on a parameter 
$h\in ]0,1]$ (often referred to as
``Planck's parameter''), and that we will be interested in the asymptotic ({\it semiclassical
}) regime $h\searrow 0$. This small parameter will provide us with a natural threshold to
define the pseudospectrum, and thereby to measure the spectral
instability. 
The following discussion is based on the works by Davies \cite{Da99} and 
Dencker, Sj\"ostrand and Zworski \cite{NSjZw04}. 
\\
\par
Let $d\geq1$ and $h\in]0,1]$. An \emph{order function} 
$m\in\mathcal{C}^{\infty}(\R^{2d}; [1,\infty[ )$, is a 
function satisfying the following growth condition:
 \begin{equation}\label{C1:eq3}
 \exists C_0\geq 1,~ \exists N_0 >0 : \quad  
 m(\rho) \leq C_0 \langle \rho-\mu\rangle^{N_0} m(\mu),  
 \quad \forall \rho,\mu \in \R^{2d},
 \end{equation}
where $\langle \rho-\mu\rangle:=\sqrt{1+|\rho-\mu|^2}$ denotes the 
``Japanese brackets''. We will also sometimes write $(x,\xi)=\rho\in \R^{2d}$, 
so that $\xi\in \R^d$. To such an order function $m$ we may associate a semiclassical 
symbol class \cite{DiSj99,Zw12}. We say that a smooth function 
$p\in \mathcal{C}^{\infty}(\R^{2d}_\rho,]0,1]_h)$ is in the symbol class $S(m)$ 
if for any multiindex $\alpha \in \N^{2d}$ there exists a constant $C_\alpha>0$ such 
that 
\begin{equation}\label{C1:eq4}
|\partial^{\alpha}_{\rho}p(\rho;h)|\leq C_{\alpha}m(\rho),
 \quad\forall \rho\in\R^{2d},\ \forall h\in]0,1].
\end{equation}
We refer the reader for further reading on semiclassical analysis 
to \cite{DiSj99,Zw12,Ma02}.
\par
Let the symbol $p\in S(m)$, $m\geq 1$, be a ``classical'' symbol, namely it 
satisfies an asymptotic expansion in the limit $h\to 0$:
\begin{equation}\label{C1:eq5}
   p(\rho;h) \sim p_0(\rho)+ hp_1(\rho) + \dots \quad \text{in } S(m),
\end{equation}
where each $p_j\in S(m)$ is independent of $h$. We assume that there 
exists a $z_0\in \C$ and a $C_0>0$ such that 
\begin{equation}\label{C1:eq5.2}
   |p_0(\rho)-z_0| \geq m(\rho)/C_0, \quad \rho \in T^*\R^{d}. 
\end{equation}
In this case we call $p_0$ the (semiclassical) principal symbol of $p$. 
We then define two subsets of $\C$ associated with $p_0$:
\begin{equation}\label{C1:eq5.1}
       \Sigma:= \Sigma(p_0):=\overline{p_0(T^*\R^{d})} , \qquad
       \Sigma_{\infty} := \{
       z\in\Sigma; ~\exists (\rho_j)_{j\geq 1} \text{ s.t. } |\rho_j|\to\infty, ~ p_0(\rho_j)\to z
       \}.
\end{equation}
The set $\Sigma$ is the \emph{classical spectrum}, and $\Sigma_\infty$ can be 
called the \emph{classical spectrum at infinity} of the $h$-Weyl quantization of 
$p$ defined by
\begin{equation}\label{C1:eq8}
   P_hu(x):=p^w(x,hD_x)u(x) = \frac{1}{(2\pi h )^d}\iint\e^{\frac{i}{h}(x-y)\cdot \xi} 
       p\!\left(\frac{x+y}{2},\xi;h\right) u(y)dyd\xi, \quad u\in\cS(\R^d),
\end{equation}
seen as an oscillatory integral in $\xi$. The operator $P_h$ 
maps $\cS\to \cS$, and by duality $\cS'\to \cS'$, continuously. 
\subsection{Semiclassical pseudospectrum} \label{HabCh1:sec:SCPS}
Similar to \cite{NSjZw04}, we define for a symbol $p\in S(m)$ as in 
\eqref{C1:eq5} the sets 
\begin{equation}\label{C1:eq5.3}
 \Lambda_\pm(p):=\left\{ p(\rho); ~\pm\frac{1}{2i}\{\overline{p}, p\}(\rho)<0
   \right\}\subset \Sigma \subset \C,
\end{equation}
where $\{\cdot,\cdot\}$ denotes the Poisson bracket. Note that the condition 
$\frac{1}{2i}\{\overline{p}, p\}\neq 0$ is the classical analogue of the 
$[P_h^*,P_h]\neq 0$. As in \cite{NSjZw04} we call the set 
\begin{equation}\label{C1:eq5.4}
 \Lambda(p):= \overline{\Lambda_-\cup \Lambda_+}
\end{equation}
the \emph{semiclassical pseudospectrum}. 
\begin{thm}[{\cite{NSjZw04}}]\label{thm:DZS}
 Suppose that $n\geq 2$, $C^\infty_b(T^*\R^d) \ni p\sim p_0+hp_1+\dots$, and $p_0^{-1}(z)$ is 
 compact for a dense set of values $z\in\C$. If $P_h=p^w(x,hD_x)$, then 
 \begin{equation*}
     \Lambda(p_0)\backslash \Sigma_\infty \subset \overline{\Lambda_+(p_0)}
 \end{equation*}
 and for every $z\in\Lambda_+(p_0)$ and every $\rho_0\in T^*\R^d$ with 
 \begin{equation*}
   p_0(\rho_0)=z, \quad \frac{1}{2i}\{\overline{p}_0, p_0\}(\rho_0)<0,
 \end{equation*}
 there exists $0\neq e_+\in L^2(\R^d)$ such that 
 \begin{equation}\label{C1:eq5.5}
   \|(P_h -z)e_+\| = \mO(h^\infty)\|e_+\|, 
   \quad \WF_h(e_+)\footnote{This is to say that the semiclassical 
   wavefront set of $e_+$ is given by $\rho_0$. In other words, 
   the state $e_+$ is concentrated in position and frequency near 
   the point $\rho_0$. See for instance \cite{Zw12} for a definition.}
    = \{\rho_0\}. 
 \end{equation}
 If, in addition, $p$ has a bounded holomorphic continuation to to 
 $\{\rho\in \C^{2d}, ~|\Ima \rho | \leq 1/C\}$, then \eqref{C1:eq5.5} 
 holds with the $h^\infty$ replaced by $\exp(-1/(Ch))$. 
 \par
 If $n=1$, then the same conclusion holds, provide that in addition 
 to the general assumptions, each component of $\C\backslash \Sigma_\infty$ 
 has a nonempty intersection with $\complement \Lambda(p)$. 
\end{thm}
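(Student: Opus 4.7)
The plan is to construct $e_+$ explicitly as a semiclassical WKB quasimode of the form
\[
e_+(x;h) = \chi(x)\,a(x;h)\,\e^{i\phi(x)/h},
\]
where $\chi\in \mathcal{C}^\infty_c(\R^d)$ is a cutoff equal to $1$ near $x_0$, $\phi$ is a complex-valued phase defined in a neighborhood of $x_0$ (with $\rho_0=(x_0,\xi_0)$), and $a\sim a_0+h a_1+\cdots$ is a classical symbol. After replacing $p$ by $p-z$ we may assume $z=0$. The requirements on $\phi$ are the initial data
$\phi(x_0)=0$, $\partial_x\phi(x_0)=\xi_0$, the eikonal equation $p_0(x,\partial_x\phi(x))=0$, and the positivity $\Ima \phi(x)\geq c|x-x_0|^2$ for some $c>0$ near $x_0$. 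The amplitude $a$ will be obtained by solving the ensuing transport equations order by order in $h$ and then Borel-summing.

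The first key step is the construction of such a complex phase. Working in an almost-analytic extension of $p_0$ one looks for a (complex) Lagrangian $\Lambda_\phi\subset T^*\C^d$ through $\rho_0$ on which $p_0$ vanishes and whose projection to $x$-space is non-degenerate at $x_0$; the generating function of $\Lambda_\phi$ is the desired $\phi$. The Hamilton--Jacobi/Hörmander construction reduces this to solving a holomorphic ODE issuing from $\rho_0$, and the sign condition
\[
\frac{1}{2i}\{\ol{p}_0,p_0\}(\rho_0)<0
\]
is exactly what guarantees that the resulting Lagrangian is \emph{strictly positive}, i.e.\ that the Hessian of $\Ima \phi$ at $x_0$ is positive definite. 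With $\phi$ in hand, the standard pseudodifferential stationary-phase expansion gives
\[
\e^{-i\phi/h} P_h\!\left(\chi a\,\e^{i\phi/h}\right)=\sum_{j\geq 0} h^j T_j a,
\]
where $T_0=p_0(x,\partial_x\phi)\equiv 0$ on $\supp \chi$ and $T_1=\tfrac{1}{i}L+p_1(x,\partial_x\phi)$ with $L$ a first-order transport operator along the complex Hamilton flow of $p_0$ lifted to $\Lambda_\phi$. One solves $T_1 a_0=0$ with $a_0(x_0)=1$, then inductively solves $T_1 a_{j+1}= -\sum_{k\leq j}T_{j+1-k}a_k$, and Borel-sums to obtain $a\sim \sum h^j a_j$. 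This yields $\|(P_h-z)e_+\|=\mO(h^\infty)\|e_+\|$; the concentration $\WF_h(e_+)=\{\rho_0\}$ follows from the Gaussian weight $\e^{-\Ima\phi/h}$ in $|e_+|^2$ and a standard stationary-phase computation of the localization of $e_+$ in position and frequency.

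The hard part is the complex eikonal step: one has to turn the infinitesimal positivity $\frac{1}{2i}\{\ol p_0,p_0\}(\rho_0)<0$ into a global-near-$x_0$ positive Lagrangian, and in the $C^\infty$ category justify this via almost-analytic machinery, incurring only $\mO(h^\infty)$ errors. If $p$ extends holomorphically to a tubular neighborhood $|\Ima \rho|\leq 1/C$ of $T^*\R^d$, the almost-analytic step is replaced by a genuine holomorphic construction, and the transport equations can be solved within analytic-symbol estimates of Sjöstrand type; a realized Borel sum then gives an analytic amplitude and the sharper bound $\mO(\e^{-1/(Ch)})$. Finally, in dimension $d=1$ the local construction goes through identically, but to guarantee that $\Lambda(p_0)\setminus\Sigma_\infty\subset \ol{\Lambda_+(p_0)}$ one needs to connect any $z\in \Lambda(p_0)$ to a point in $\complement \Lambda(p)$ through $\C\setminus \Sigma_\infty$, which is the role of the extra topological hypothesis: it allows one to propagate the quasimode construction along such a path via deformation.
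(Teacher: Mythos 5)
The paper states this theorem without proof (it is quoted from \cite{NSjZw04}), so the comparison has to be with that source. Your sketch of the quasimode assertion is the standard and essentially correct route, and it is the one taken there: a complex WKB ansatz $a\,\e^{i\phi/h}$ whose phase generates a strictly positive Lagrangian through $\rho_0$ on which (an almost-analytic extension of) $p_0-z$ vanishes to infinite order, the sign of $\frac{1}{2i}\{\ol{p}_0,p_0\}(\rho_0)$ being exactly what allows the positivity to be arranged for $P_h-z$ rather than for its adjoint; transport equations plus Borel summation give the $\mO(h^\infty)$ error, and genuine holomorphy of $p$ upgrades this to $\e^{-1/(Ch)}$ via analytic symbol estimates. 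One correction: in the $C^\infty$ category the eikonal equation is solved only to infinite order at $x_0$, so your claim $T_0\equiv 0$ on $\supp\chi$ is not available; one has $T_0=\mO(|x-x_0|^\infty)$, which is harmless only because of the Gaussian weight $\e^{-\Ima\phi/h}$ — you gesture at this later but the statement as written is too strong.

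The genuine gap is the first assertion, $\Lambda(p_0)\backslash\Sigma_\infty\subset\overline{\Lambda_+(p_0)}$, which you do not prove and which is where the hypotheses $n\geq 2$ and compactness of the fibres $p_0^{-1}(z)$ actually enter; the quasimode construction itself is local and dimension-independent. The mechanism is the following averaging argument: for a regular value $z\notin\Sigma_\infty$ with compact fibre and $n\geq 2$, the integral of $\{\Rea p_0,\Ima p_0\}$ against the Liouville measure over each compact connected component of $p_0^{-1}(z)$ vanishes (this is \cite[Lemma 8.1]{MeSj02}, quoted later in this very paper), so if the bracket is positive somewhere on the fibre it must be negative somewhere else; hence every point of $\Lambda_-(p_0)\backslash\Sigma_\infty$ is a limit of points of $\Lambda_+(p_0)$. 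In dimension $n=1$ the fibres are finite sets, the averaging argument fails, and the additional topological hypothesis is used in a winding-number argument to recover the same inclusion. It is \emph{not} needed, as you suggest, to ``propagate the quasimode construction along a path'': once one knows $z\in\Lambda_+(p_0)$ and has a point $\rho_0$ with the correct sign of the bracket, the construction requires no global input in any dimension.
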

This result can be extended to unbounded symbols $p\in S(T^*\R^{d},m)$, as 
in \eqref{C1:eq5}, and the corresponding operators $P_h$ with principal symbol 
$p_0$, by applying Theorem \ref{thm:DZS} to $\wt{P}_h=(P_h-z_0)^{-1}(P_h-z)$, 
with principal symbol $\wt{p}_0\in C^\infty_b(T^*\R^d)$ and $z_0$ as 
in \eqref{C1:eq5.2} and $z_0\neq z$. Indeed, note that $z\in \Sigma(p_0)$ 
if and only if $0\in \Sigma(\wt{p}_0)$, and that $\rho \in p_0^{-1}(z)$ 
with $\pm\{\Rea p_0 , \Ima p_0\}(\rho)<0$ is equivalent to 
$\rho \in \wt{p}_0^{-1}(0)$ with $\pm\{\Rea \wt{p}_0 , \Ima \wt{p}_0\}(\rho)<0$. 
Furthermore, a quasimode $u$ as in Theorem \ref{thm:DZS} for $\wt{P}_h$ 
then provides, after a possible truncation, a quasimode for $P_h-z$ in the 
same sense. 
\\
\par 
By replacing $P_h$ with its formal adjoint $P_h^*$, 
and thus $p$ with $\overline{p}$, Theorem \ref{thm:DZS} yields that 
for every $z\in\Lambda_-(p)$ and every $\rho_0\in T^*\R^d$ with 
\begin{equation*}
 p_0(\rho_0)=z, \quad \frac{1}{2i}\{\overline{p}_0, p_0\}(\rho_0)>0,
\end{equation*}
there exists $0\neq e_-\in L^2(\R^d)$ such that 
\begin{equation*}
 \|(P_h -z)^*e_-\| = \mO(h^\infty)\|e_-\|, 
 \quad \WF_h(e_-) = \{\rho_0\}. 
\end{equation*}
The additional statements of Theorem \ref{thm:DZS} about symbols 
admitting a holomorphic extension to a complex neighborhood of $\R^{2d}$, 
and the case when $n=1$ hold as well. 
\begin{ex}\label{ex:DaviesOperator}
The guiding example to keep in mind is the case of the non-selfadjoint 
Harmonic oscillator 
\begin{equation*}
   P_h = (hD_x)^2 + i x^2
\end{equation*}
seen as an unbounded operators $L^2(\R)\to L^2(\R)$. The principal symbol 
of $P_h$ is given by $p(x,\xi)=\xi^2+ix^2\in S(T^*\R,m)$, with weight 
function $m(x,\xi) = 1+ \xi^2 + x^2$. We equip $P_h$ with the domain 
$H(m):=(P_h+1)^{-1}L^2(\R)$, where the operator on the right is the 
pseudo-differential inverse of $P_h+1$. This choice of domain makes $P_h$ 
a closed densely defined operator. Using, for instance, the method of complex scaling 
we see that the spectrum of $P_h$ is given by 
\begin{equation}\label{C1eq:DaviesOp1}
   \spec(P_h) = \{\e^{i\pi/4}(2n+1)h;n\in\N\}.
\end{equation}
Furthermore, $\Sigma$ is the closed 1st quadrant in the complex plane, 
whereas $\Sigma_\infty = \emptyset$. For $\rho=(x,\xi) \in T^*\R$, we find that 
\begin{equation}\label{C1eq:DaviesOp1.1}
   \frac{1}{2i}\{\overline{p}, p\}(x,\xi) = 2\xi\cdot x. 
\end{equation}
Thus, for every $z\in \mathring{\Sigma}$ there exist points 
\begin{equation*}
   \rho_{+}^j(z)= (-1)^j(-\sqrt{|\Rea z|},\sqrt{|\Ima z|}), \quad
   \rho_{-}^j(z)= (-1)^j(-\sqrt{|\Rea z|},-\sqrt{|\Ima z||}), \quad j=1,2,
\end{equation*}
such that 
\begin{equation*}
   \pm \frac{1}{2i}\{\overline{p}, p\}(\rho_{\pm}^j(z)) < 0, \quad j=1,2.
\end{equation*}
Using the WKB method, we can construct quasimodes of the form $e^j_+(x;h)=a_+^j(x;h)\e^{i\phi^j_+(x)/h}$ 
with $a_+^j(x;h)\in C^\infty_c(\R)$ admitting an asymptotic expansion $a_+^j(x;h) \sim a_{+,0}^j(x)
+ha_{+,1}^j(x)+\dots$ with $\WF_h(e_+^j)=\{ \rho_{+}^j(z)\}$ and 
\begin{equation}\label{C1:eq6}
   \|(P_h-z)e_+^j\| = \mO(\e^{1/Ch}),
\end{equation}
see \cite{Da99,Da99b} for an explicit computation, and \cite{NSjZw04} for a 
more general construction.
\end{ex}
In fact the works of Davies \cite{Da99,Da99b} provide an explicit WKB construction for a 
quasimode $u$ for one-dimensional non-selfadjoint Schr\"odinger operators $P_h-z=(hD_x)^2+V(x)-z$ 
on $L^2(\R)$ with $V\in C^\infty(\R)$ complex-valued and $z=V(a)+\eta^2$, for some $a\in\R$, $\eta>0$. 
Furthermore, one assumes that $\Ima V'(a)\neq 0$. These works were the starting point for the 
quasimode construction for non-selfadjoint (pseudo-)differential operators. Zworski \cite{Zw01} linked 
Davies' quasimode construction under the condition on the gradient of $\Ima V$ to a quasimode construction 
under a non-vanishing condition of the Poisson bracket $\frac{1}{2i}\{\overline{p}, p\}$. Furthermore, 
Zworski \cite{Zw01} established the link to the famous commutator condition of H\"ormander \cite{Hoe60,Hoe60b}. 
A full generalization of the quasimode construction under a non-vanishing condition of the poisson bracket, 
see Theorem \ref{thm:DZS} 
above, was then achieved by Dencker, Sj\"ostrand and Zworski \cite{NSjZw04}. Finally, Pravda-Starov 
\cite{Pr03,Pr06,Pr08} improved these results by modifying a quasimode construction by Moyer and H\"ormander, 
see \cite[Lemma 26.4.14]{Ho85}, for adjoints of operators that do not satisfy the Nirenberg-Trèves 
condition ($\Psi$) for local solvability. 
\par 
For a quasimode construction for non-selfadjoint boundary value problems we refer the 
reader to the work of Galkowski \cite{Ga14}. 
\\
\par
Notice, that \eqref{C1:eq5.5} (or \eqref{C1:eq6} in the example above) implies that 
if the resolvent $(P_h-z)^{-1}$ exists then it is larger than any power of $h$ when 
$h\to 0$, or even larger than $\e^{1/Ch}$ in the analytic case. We call each family 
$(e_{+}^j(z,h))$ an $h^{\infty}$-quasimode of $P_h-z$, or for short a quasimode of $P_h-z$.
\\
\par
From the quasimode equation \eqref{C1:eq5.5} it is easy to exhibit an operator $Q$ of 
unity norm and a parameter $\delta=\mO(h^{\infty})$, such that the perturbed operator 
$P_h+\delta Q$ has an eigenvalue at $z$. For instance, if we call the error 
$r_+=(P_h-z)e_+$, we may take the rank 1 operator $\delta Q=- r_+ \otimes (e_+)^*$. 
By Theorem \ref{thm:DZS} we see that the interior of the set $\Lambda(p)$, away from 
the set $\Sigma_\infty$, is a zone of strong spectral instability 
for $P_h$. For this reason we may refer to the semiclassical pseudospectrum $\Lambda(p)$ 
also as the ($h^\infty$-)pseudospectrum of $P_h$. Finally, we refer the reader also 
to the works of Pravda-Starov \cite{Pr03,Pr06,Pr08} for a refinement of the notion of 
semiclassical pseudospectrum. 
\subsection{Outside the semiclassical pseudospectrum.} When 
\begin{equation*}
   z \in \C\backslash \Sigma(p),
\end{equation*}
then by condition \eqref{C1:eq5.2} we have that $(p_0(\rho)-z)\geq m(\rho)/C$ 
for some sufficiently large $C>0$ and so we know that the inverse 
$(P_h-z)^{-1}$ is a pseudo-differential operator with principal 
symbol $(p_0-z)^{-1} \in S(1/m)\subset S(1)$. Hence, $(P_h-z)^{-1}$ 
maps $L^2\to L^2$ and 
\begin{equation}
   \|(P_h-z)^{-1}\| =\mO(1)
\end{equation} 
uniformly in $h>0$. Hence, from the semiclassical point of view we may 
consider $\C\backslash \Sigma$ as a \emph{zone of spectral stability}. 
\subsection{At the boundary of the semiclassical pseudospectrum}
\label{C2Susec:ResEstBd}
At the boundary of the semiclassical pseudospectrum we find a transition 
between the zone of strong spectral instability and stability. Indeed at 
the boundary we find an improvement on the resolvent bounds, assuming some 
additional non-degeneracy:
\par
Splitting a symbol $p\in C^\infty_b(T^*\R^d)$ into real and imaginary part, 
$p=p_1+ip_2$, we consider the iterated Poisson bracket 
\begin{equation*}
	p_I := \{p_{i_1},\{p_{i_2},\{\dots,\{p_{i_{k-1}},p_{i_k}\}\}\dots \}\}
\end{equation*}
where $I\in\{1,2\}^k$, and $|I|=k$ is called the \emph{order} of the Poisson 
bracket. The \emph{order} of $p$ at $\rho\in T^*\R^d$ is given by 
\begin{equation*}
	k(\rho) := \max\{ j\in\N; p_I(\rho)=0,~1<|I|\leq j\}.
\end{equation*}
The \emph{order} of $z_0\in\Sigma\backslash \Sigma_\infty$ is the maximum 
of $k(\rho)$ for $\rho\in p^{-1}(z_0)$. 
\begin{thm}{\cite{NSjZw04,Sj10a}}\label{thm:DZSboundary}
	Assume that $C^\infty_b(T^*\R^d)\ni p\sim p_0+hp_1+\dots$. Let 
	$P_h = p^w(x,hD_x)$, and let 
	$z_0\in \partial \Sigma(p_0) \backslash \Sigma_\infty(p_0)$. Suppose that 
	$dp_0\neq 0$ at every point in $p^{-1}_0(z_0)$, that $z_0$ is 
	of finite order $k\geq 1$ for $p$. Then, $k$ is even and 
	for $h>0$ small enough 
	\begin{equation*}
		\| (P_h-z)^{-1}\| \leq Ch^{-\frac{k}{k+1}}.
	\end{equation*}
	In particular, there exists a $c_0>0$, such that for $h>0$ small 
	enough 
	\begin{equation*}
		\{z\in\C; |z-z_0| \leq c_0h^{\frac{k}{k+1}} \} \cap \spec(P_h) =
		\emptyset. 
	\end{equation*}
\end{thm}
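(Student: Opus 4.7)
The plan is to reduce the theorem to a semiclassical subelliptic estimate
\[
\|(P_h - z)u\| \;\geq\; c\, h^{\frac{k}{k+1}} \|u\|, \qquad u \in \mathcal{D}(P_h),
\]
uniformly for $z$ in a small fixed neighborhood of $z_0$ and for $h\in ]0,h_0]$, from which both conclusions follow immediately (the second by picking $c_0<c$). The first preparatory step is a normalization of the symbol. Translating, we may take $z_0 = 0$, and by rotating the spectral plane we may assume $\Sigma(p_0)$ lies in the closed upper half-plane near $0$, so that $p_2 := \Ima p_0 \geq 0$ in a neighborhood of $p_0^{-1}(0)$. Each $\rho_0 \in p_0^{-1}(0)$ is therefore a local minimum of $p_2$, hence $dp_2(\rho_0)=0$; combined with $dp_0(\rho_0)\neq 0$ this forces $dp_1(\rho_0)\neq 0$ for $p_1 := \Rea p_0$. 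The hypothesis $z_0 \notin \Sigma_\infty$ ensures that $p_0^{-1}(z_0)$ is compact, reducing the argument to a finite microlocal partition of unity.

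For the parity of $k$, I would look along the Hamiltonian flow of $p_1$ through $\rho_0$: the function $f(t) := p_2(\exp(tH_{p_1})\rho_0)$ is non-negative and vanishes at $t=0$, and its Taylor coefficients are the iterated brackets $(\operatorname{ad}p_1)^j p_2(\rho_0)$, which have length $j+1$. By the definition of the order these vanish for $j\leq k-1$; since $f\geq 0$ its lowest non-vanishing derivative must be of even order, forcing either $k$ even or $(\operatorname{ad}p_1)^k p_2(\rho_0)=0$. In the latter case, a short Jacobi-identity bookkeeping together with $p_2\geq 0$ shows that the remaining brackets of length $k+1$ (those involving at least two factors of $p_2$) must also vanish, contradicting the definition of $k$. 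This is the standard Hörmander parity argument.

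The resolvent bound itself comes from a microlocal subelliptic estimate near each $\rho_0 \in p_0^{-1}(z_0)$. The correct scale is $\delta = h^{1/(k+1)}$: in a symplectic chart straightening the bicharacteristic of $p_1$, $p_2$ vanishes to order $k$ in the transverse directions, so after dilating those variables by $\delta$ the operator $(P_h-z)/h$ becomes a model whose lowest singular value scales as $\delta^k = h^{k/(k+1)}$. I would realize this rigorously via a positive-commutator / Gårding argument applied to $\Rea\bigl(\chi^2 (P_h-z)^*(P_h-z)u,u\bigr)$, with $\chi\in S(1)$ a microlocal cut-off localizing at scale $\delta$ around $\rho_0$; the leading symbol $|p_0-z|^2$, combined with the subprincipal correction $\tfrac{h}{i}\{\ol{p}_0,p_0\}$ and the iterated brackets up to order $k$, yields a lower bound $\gtrsim h^{2k/(k+1)}\|u\|^2$ modulo $\mO(h^\infty)\|u\|^2$ errors. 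Outside a small neighborhood of $p_0^{-1}(z_0)$ the operator is elliptic by \eqref{C1:eq5.2} (applied if necessary to $\wt p_0 = (p_0-z_0)^{-1}(p_0-z)$ as in the reduction following Theorem \ref{thm:DZS}), so $\|(P_h-z)u\|\gtrsim \|u\|$ holds there; a finite microlocal partition of unity glues the pieces into the desired global estimate.

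The main obstacle is this last step: the sharp exponent $k/(k+1)$ is the semiclassical counterpart of Hörmander's classical subelliptic estimates \cite{Hoe60,Hoe60b}, and its proof requires a delicate IMS-type commutator scheme calibrated to the scale $h^{1/(k+1)}$ together with careful bookkeeping of the iterated Poisson brackets $p_I$ up to order $k$. By contrast, the normalization and parity steps are soft consequences of the sign constraint $p_2\geq 0$ inherited from the boundary condition $z_0\in\partial\Sigma(p_0)$.
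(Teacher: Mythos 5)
The paper gives no proof of this theorem — it is a survey statement citing \cite{NSjZw04,Sj10a}, with remarks that the result was first proved in dimension one by Zworski \cite{Zw01a} and in special cases by Boulton \cite{Bo02}. Your sketch reconstructs the route of Dencker--Sj\"ostrand--Zworski reasonably faithfully: rotate so that $\Ima p_0\ge 0$ near $p_0^{-1}(z_0)$ (hence $dp_2=0$ and $dp_1\neq 0$ on $p_0^{-1}(z_0)$), use compactness of $p_0^{-1}(z_0)$ coming from $z_0\notin\Sigma_\infty$, establish evenness of $k$ from the sign constraint, and prove a microlocal subelliptic lower bound $\|(P_h-z_0)u\|\gtrsim h^{k/(k+1)}\|u\|$ near $p_0^{-1}(z_0)$ glued with ellipticity away from it; the spectral-gap conclusion is then a Neumann-series corollary of the resolvent bound at $z_0$, so you do not even need uniformity in $z$.

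Two places where the write-up underestimates the work. First, in the parity argument, the step ``$(\operatorname{ad} p_1)^k p_2(\rho_0)=0$ together with $p_2\ge0$ and the vanishing of all brackets of length $\le k$ forces all brackets of length $k+1$ to vanish'' is not a short Jacobi-identity computation; it is H\"ormander's lemma on iterated brackets with a non-negative imaginary part (see \cite[Chapter~27]{Ho85}) and is the technical core of that step — it should be cited, not waved through. Second, and more substantively, the positive-commutator/G\aa rding argument as you describe it will not produce the iterated brackets of order up to $k$: expanding $(P_h-z)^*(P_h-z)$ in the Weyl calculus gives the symbol $|p_0-z|^2 + \frac{h}{2i}\{\ol{p}_0,p_0\} + \mO(h^2)$, and the order-$h$ bracket term already vanishes on $p_0^{-1}(z_0)$ precisely because $z_0$ is a boundary point; the higher iterated brackets do not appear at any fixed order of the symbol expansion and only enter through a nested commutator/energy iteration. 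This is why \cite{NSjZw04} does not re-derive the estimate but rather microlocally reduces $P_h-z_0$ to $hD_{x_1}+if(x,hD_x)$ with $f\ge0$ vanishing to order $k$ and invokes H\"ormander's classical subelliptic estimate with loss $k/(k+1)$ of a derivative \cite[Theorem~27.1.11]{Ho85}; and \cite{Sj10a} obtains the same bound by a genuinely different route, via estimates on the semigroup $\exp(-tP_h/h)$ combined with exponential-weight/FBI techniques. To close the gap, either cite the classical subellipticity theorem after the microlocal factorization, or make explicit that ``careful bookkeeping of the iterated brackets'' is shorthand for that entire machinery.
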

This result was proven in dimension $1$ by Zworski \cite{Zw01a}, and in certain 
cases by Boulton \cite{Bo02}. Further refinements have been obtained in 
\cite{Sj10a}. Similar to the discussion after Theorem \ref{thm:DZS}, we can extend 
Theorem \ref{thm:DZSboundary} to unbounded symbols $p\in S(T^*\R^{d},m)$ and their 
corresponding quantizations. 
\begin{ex}\label{ex:DaviesOperator2}
	Recall the non-selfadjoint Harmonic oscillator $P_h=(hD_x)^2+ix^2$ 
	from Example \ref{ex:DaviesOperator}. Here $\partial\Sigma = \R_+ \cup i\R_+$, 
	so we see by \eqref{C1eq:DaviesOp1.1} that for $0\neq z_0 \in \Sigma$ 
	\begin{equation*}
		\frac{1}{2i}\{\overline{p},p\}(\rho) 
		= \{\Rea p,\Ima p\}(\rho) = 0, \quad \rho \in p^{-1}(z_0). 
	\end{equation*}
	However, there 
	\begin{equation*}
		\text{either } \{\Rea p,\{\Rea p,\Ima p\}\} (\rho) = 4\xi^2\neq 0, \quad 
		\text{or }\{\Ima p,\{\Rea p,\Ima p\}\} (\rho) = -4x^2\neq 0,
	\end{equation*}
	so $z_0$ is of order $2$ for $p=\xi^2+ix^2$, and Theorem \ref{thm:DZSboundary} 
	tells us that 
	\begin{equation*}
		\| (P_h-z_0)^{-1}\| \leq Ch^{-\frac{2}{3}}.
	\end{equation*}
	We see that for a the $\varepsilon$-pseudospectrum of $P_h$ to 
	reach the boundary of $\Sigma$, we require $\varepsilon > h^{2/3}/C$.
\end{ex}
\subsection{Pseudospectra and random matrices}
In this section we present a short discussion of pseudospectra for large 
$N\times N$ random matrices. We can interpret the $1/N$, for $N\gg1$, as 
an analogue to the semiclassical parameter. Recalling the example of the 
non-selfadjoint 
harmonic oscillator, see Example \ref{ex:DaviesOperator}, we see that 
pseudospectra can be very large in general. However, in a generic setting 
they are typically much smaller. 
\par 
Let $M\in \C^{N\times N}$ be a complex $N\times N$ matrix and let 
$s_1(M)\geq \dots \geq s_N(M)\geq 0$ denote its singular values, 
i.e. the eigenvalues of $\sqrt{M^*M}$ ordered in a decreasing manner 
and counting multiplicities. Note that if $M-z$ is bijective for some 
$z\in\C$, then 
\begin{equation*}
	\|(M-z)^{-1}\| = s_N(M-z)^{-1}. 
\end{equation*}
In view of \eqref{C1:eq1}, the $\varepsilon$-pseudospectrum of $M$ 
is then characterized by the condition that $z\in \spec_\varepsilon(M)$ 
\begin{equation*}
	z\in \spec_\varepsilon(M) 
	\quad 
	\Longleftrightarrow 
	\quad 
	s_N(M-z)<\varepsilon. 
\end{equation*}
A classical result due to Sankar, Spielmann and Teng 
\cite[Lemma 3.2]{SaTeSp06} (stated there for real 
Gaussian random matrices) tells us that with high 
probability the smallest singular value of a deformed 
random matrix is not too small. 
\begin{thm}[{\cite{SaTeSp06} }]\label{thm:SSV}
There exists a constant $C>0$ such that the following holds. 
Let $N\geq 2$, let $X_0$ be an arbitrary complex $N\times N$ matrix, 
and let $Q$ be an $N\times N$ complex 
Gaussian random matrix whose entries are 
all independent copies of a 
complex Gaussian random variable $q\sim \mathcal{N}_{\C}(0,1)$. 
Then, for any $\delta >0$
\begin{equation*}
	\prob \left(s_N(X_0 + \delta Q) < \delta t \right) \leq C
	N t^2.
\end{equation*}
\end{thm}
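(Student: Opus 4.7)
The plan is to reduce the claim to a one-dimensional complex Gaussian small-ball estimate via a column-distance characterization of $s_N$.

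First, I would rescale to $\delta = 1$ using $s_N(X_0 + \delta Q) = \delta\, s_N(X_0/\delta + Q)$, reducing matters to proving $\prob(s_N(A + Q) < t) \leq CN t^2$ for a standard complex Ginibre matrix $Q$ and arbitrary deterministic $A \in \C^{N\times N}$. Writing $M = A + Q$ with columns $c_1,\dots,c_N$ and setting $H_i = \mathrm{span}\{c_j : j\neq i\}$, $d_i = \mathrm{dist}(c_i, H_i)$, I would note that since $c_j \in H_i$ for $j\neq i$, projecting $Mu = \sum_j u_j c_j$ onto $H_i^\perp$ yields $\|Mu\| \geq |u_i|\, d_i$ for each $i$; combined with $\max_i |u_i| \geq 1/\sqrt{N}$ this gives the column-distance lower bound $s_N(M) \geq \min_i d_i/\sqrt{N}$.

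The heart of the argument is then a one-dimensional Gaussian small-ball estimate for $d_i$. I would fix $i$ and condition on $\{c_j : j\neq i\}$; this determines $H_i$ (of complex dimension at most $N-1$) and hence a measurable choice of unit vector $n_i$ in its non-trivial orthogonal complement. Since $c_i = a_i + q_i$ with $q_i$ a standard complex Gaussian vector independent of the conditioning, rotational invariance gives $\langle n_i, q_i\rangle \sim \mathcal{N}_{\C}(0,1)$; as this distribution has density on $\C$ bounded by $1/\pi$, I would deduce for any deterministic $z_0\in\C$ that
\[
 \prob\bigl(|z_0 + \langle n_i, q_i\rangle| < r\bigr) \leq r^2,
\]
whence, using $d_i \geq |\langle n_i, c_i\rangle|$ and unconditioning, $\prob(d_i < r) \leq r^2$.

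A union bound then gives
\[
 \prob(s_N(M) < t) \leq \sum_{i=1}^N \prob\bigl(d_i < \sqrt{N}\,t\bigr) \leq N\cdot Nt^2 = N^2 t^2,
\]
which establishes the statement with $N^2$ in place of $N$. The main obstacle is precisely this extra factor of $N$: both the $\sqrt{N}$ in the column-distance inequality and the union bound over $i$ are individually sharp, so removing the loss requires exploiting the joint distribution of the $d_i$'s rather than treating them independently. To recover the sharp $N t^2$ one can instead analyze directly the Gaussian measure of the open tubular $t$-neighborhood of the complex codimension-one variety $\{\det M = 0\}$ of rank-deficient matrices, with the codimension accounting for the $t^2$ factor and the variety's Gaussian-weighted mass being of order $N$.
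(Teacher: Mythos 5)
Your setup is correct: the rescaling to $\delta=1$, the lower bound $s_N(M)\ge \min_i d_i/\sqrt{N}$, and the conditional small-ball estimate $\prob(d_i<r)\le r^2$ for the complex Gaussian case are all sound, and you are candid that the union bound over $i$ then only delivers $N^2t^2$. As written, the argument therefore proves a strictly weaker statement, and the closing gesture about the Gaussian mass of a tube around $\{\det M=0\}$ is a heuristic, not a proof: quantifying the ``Gaussian-weighted mass'' of that variety is essentially as hard as the original problem. So there is a genuine gap, namely the superfluous factor of $N$, and it is not repaired in your proposal. (Note also that the paper itself does not give a proof but only cites \cite{SaTeSp06,TaVu10,Vo20}; the comparison below is with those arguments.)

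The way the cited proofs avoid the loss is precisely by not taking a minimum over $N$ column distances. Write $M=A+Q$ and let $w_1$ be the top right singular vector of $M^{-1}$, so that $\|M^{-1}u\|\ge |\langle w_1,u\rangle|\,\|M^{-1}\|$ for every $u$. For $u$ drawn uniformly on the unit sphere of $\C^N$ and independent of $Q$, one has $\prob_u\bigl(|\langle w_1,u\rangle|\ge \tfrac{1}{2\sqrt N}\bigr)\ge c_0$ for an absolute constant $c_0>0$, so
\[
\prob_Q\bigl(\|M^{-1}\|>1/\vep\bigr)\ \le\ c_0^{-1}\,\prob_{Q,u}\bigl(\|M^{-1}u\|>1/(2\sqrt N\,\vep)\bigr).
\]
Now, by the right-unitary invariance of the complex Ginibre law, one may fix $u=e_N$ (absorbing the rotation into $A$); and $\|M^{-1}e_N\|$ is the norm of the last column of $M^{-1}$, which equals the reciprocal of the distance from the $N$th row of $M$ to the span of the remaining rows. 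Conditioning on those rows, this distance is $|z_0+g|$ with $g\sim\mathcal N_\C(0,1)$, and so
\[
\prob_Q\bigl(\|M^{-1}e_N\|>s\bigr)\ \le\ s^{-2}.
\]
Putting $s=1/(2\sqrt N\,\vep)$ gives $\prob(s_N(M)<\vep)\le 4c_0^{-1}N\vep^2$. The key difference from your approach is that a single distance (to a row hyperplane), selected via the random test vector $u$, replaces the minimum over $N$ column distances; this eliminates the union bound and with it the extra factor of $N$.
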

\begin{proof}
For real matrices the proof can be found in \cite[Lemma 3.2]{SaTeSp06}, 
see also \cite[Theorem 2.2]{TaVu10}. For complex matrices a proof is 
presented for instance in \cite[Appendix A]{Vo20}.
\end{proof}
Theorem \ref{thm:SSV} tells us that any fixed $z\in \C$ is not in the 
$\varepsilon$-pseudospectrum of $X+\delta Q$ with probability 
$\geq 1 - CN\varepsilon^2\delta^{-2}$. We can interpret this 
result as saying that the pseudospectrum of random matrices 
is typically \emph{not too large}. 
% More precisely, recall that 
% since $Q$ in Theorem \ref{thm:SSV} is Gaussian, we know 
% (see e.g. \cite{Ta12,TaVu10}) that for $C_1>0$ is sufficiently large, 
% %
% \begin{equation}\label{C1RM:eq1}
% \prob\left(
% \Vert Q\Vert\le C_1N^{1/2}
% \right)
% \geq 1-e^{-N}.
% \end{equation}
% %
% Hence, putting $\delta = N^{-1/2}$ makes $\delta Q$ uniformly 
% bounded in $N$ with probability $\geq 1 -e^{-N}$. Thus, if 
% $\|X_0\|= \mO(1)$ we see that for $|z|\gg 1$, 
% $\|(X_0 + \delta Q-z)^{-1}\| \leq \mO(1)$ with 
% with probability $\geq 1 -e^{-N}$. Thus, for $\varepsilon>0$ 
% sufficiently small, a conditional version of Markov's inequality 
% shows that  
% %
% \begin{equation}\label{C1RM:eq2}
% 	\prob[L(\spec_\varepsilon(X_0+\delta Q))\geq \tau]
% 	\leq \mO(1)N^2\varepsilon^2\tau^{-1} + e^{-N}, \quad \tau>0.
% \end{equation}
% %
% This shows that the \emph{typical} size (measure here with respect to 
% the Lebesgue measure $L$ on $\C$) of the $\varepsilon$-pseudospectrum 
% of such deformed complex Gaussian random matrices is of order $N^2\varepsilon^2$. 
% In contrast, for a selfadjoint or normal matrix $M$ satisfying $\|M\|=\mO(1)$, 
% the inclusion \eqref{C1:eq1.1} shows  
% %
% \begin{equation}\label{C1RM:eq3}
% 	L(\spec_\varepsilon(M))
% 	=\mO(1)\varepsilon.
% \end{equation}
%
Theorem \ref{thm:SSV} has enjoyed many extensions. For instance Rudelson and Vershynin 
\cite{RV08} consider the case random matrices with iid sub-Gaussian entries. 
Tao and Vu \cite{TaVu08} consider iid entries of non-zero variance. 
Cook \cite{Co18} consider the case of random matrices whose of entries have an 
inhomogeneous variance profile under appropriate assumptions. We end this section 
by noting the following, quantitative result due to Tao and Vu. 
\begin{thm}[{\cite{TaVu10}}]\label{thm:TavoVu}
Let $q$ be a random variable with mean zero and bounded second moment, 
and let $\gamma\geq 1/2$, $A\geq 0$ be constants. Then, there exists 
a constant $C>0$, depending on $q,\gamma,A$ such that the following 
holds. Let $Q$ be the random matrix of size $N$ whose entries are independent 
and identically distributed copies of $q$, let $X_0$ be a deterministic 
matrix satisfying $\|X_0\| \leq N^\gamma$. Then, 
\begin{equation}\label{gc2.4.2}
\textbf{P}\left( s_n(X_0 + Q) \leq n^{-\gamma (2A+2) +1/2} \right) 
\leq  C \left( n^{-A + o(1)} + \prob( \| Q\| \geq n^{\gamma})\right).
\end{equation}
\end{thm}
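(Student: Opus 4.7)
The plan is to lower bound $s_n(X_0+Q)=\inf_{\|v\|=1}\|(X_0+Q)v\|$ by combining the Rudelson--Vershynin decomposition of the unit sphere into compressible and incompressible vectors with the Tao--Vu negative second moment identity for rows. I first restrict to the event $\mathcal{E}:=\{\|Q\|<n^\gamma\}$; its complement already contributes the term $\prob(\|Q\|\geq n^\gamma)$ on the right-hand side of \eqref{gc2.4.2}. On $\mathcal{E}$ one has $\|X_0+Q\|\leq 2n^\gamma$. Setting $t:=n^{-\gamma(2A+2)+1/2}$, I would split the unit sphere into $\mathrm{Comp}(\delta,\rho):=\{v\in S^{n-1}: \mathrm{dist}(v,\{w:|\mathrm{supp}\,w|\leq\delta n\})\leq\rho\}$ and its complement $\mathrm{Incomp}(\delta,\rho)$ for small parameters $\delta,\rho>0$ to be fixed later, and bound $\prob(\inf_{v\in \cdot}\|(X_0+Q)v\|\leq t)$ on each piece.

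For compressible vectors I would use a standard $\varepsilon$-net argument at scale $\varepsilon\sim t/n^\gamma$. The net has cardinality bounded by $\binom{n}{\delta n}(C/\varepsilon)^{\delta n}$, and for each fixed $v$ the $n$ coordinates of $(X_0+Q)v$ are independent across the rows of $Q$, so an Esseen-type anti-concentration estimate applied row by row yields a small-ball bound of the form $\prob(\|(X_0+Q)v\|\leq 3t)\leq (C n^\gamma t)^n$. Provided $\delta$ is chosen small enough, the union bound over the net leaves a probability well below $n^{-A}$.

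For incompressible vectors I would invoke the identity $s_n(M)^{-2}\leq \sum_{k=1}^n \mathrm{dist}(R_k,H_k)^{-2}$, where $R_k$ is the $k$th row of $M=X_0+Q$ and $H_k$ is the span of the remaining rows. Writing $w_k$ for a unit normal to $H_k$, we have $\mathrm{dist}(R_k,H_k)=|\langle R_k,w_k\rangle|$, and crucially $w_k$ is independent of $R_k$. Conditioning on $H_k$, the scalar $\langle Q_k,w_k\rangle$ is a weighted sum of iid copies of $q$, and a Berry--Esseen inequality yields $\sup_x \prob(|\langle Q_k,w_k\rangle-x|\leq s)\leq C(s+n^{-1/2})$ whenever $w_k$ is sufficiently spread. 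The ingredient that closes the loop is that with overwhelming probability $w_k$ is itself incompressible; this follows by applying the compressible-part estimate to the transpose $M^*$ to rule out compressible unit vectors in the approximate kernel.

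The main obstacle is achieving the $n^{-A+o(1)}$ tail under only the bounded second moment hypothesis. Without subgaussian tails one loses both sharp operator-norm concentration and sharp small-ball estimates, so both the compressible and incompressible steps require a careful truncation of the entries of $Q$ at a scale comparable to $n^\gamma$, together with Esseen-type anti-concentration for the truncated variables; this truncation is precisely the source of the $o(1)$ loss in the exponent. Balancing the parameters $\delta,\rho,\varepsilon$ against the polynomial factors $n^\gamma$ and $t$, and propagating them through the net cardinality and the distance estimate, yields the prescribed threshold $t=n^{-\gamma(2A+2)+1/2}$.
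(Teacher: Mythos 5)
The paper does not prove this theorem: it is a survey, and the statement is a direct citation of Tao and Vu \cite{TaVu10} given without argument. There is therefore no proof in the paper to compare against; I can only assess your sketch on its own terms.

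Your sketch assembles plausible ingredients, but with two problems. First, the attribution: the proof in \cite{TaVu10} is driven by the Tao--Vu inverse Littlewood--Offord theorem (showing that a normal vector yielding too small a row-to-hyperplane distance with too high probability must have rigid arithmetic structure, approximable by a generalized arithmetic progression, which is then ruled out by entropy counting), not by the Rudelson--Vershynin compressible/incompressible decomposition with essential LCD. Both routes pass through the negative second moment identity, but they are structurally different proofs with different tolerances on the distribution of $q$ and on the deterministic shift $X_0$, and you cannot present the second one as \emph{the} argument of \cite{TaVu10}. Second, the compressible step as written would fail. The claimed small-ball bound $\prob\bigl(\|(X_0+Q)v\| \leq 3t\bigr) \leq (C n^\gamma t)^n$ for fixed $v$ cannot hold: with only a bounded second moment on the entries, no coordinate of $Qv$ has small-ball probability below the Berry--Esseen floor of order $n^{-1/2}$, so row-by-row tensorization gives at best $(C(t+n^{-1/2}))^n$, which is far weaker once $t\ll n^{-1/2}$; moreover, for compressible (nearly sparse) $v$ Berry--Esseen is not even applicable, and the standard move is instead a Paley--Zygmund-type second-moment lower bound of the form $\prob(\|(X_0+Q)v\|\leq c\sqrt n)\leq (c')^n$, with $\delta$ chosen small so that $(c')^n$ beats the net cardinality. (Also, the $n^\gamma$ factor is mislocated: rescaling by the operator-norm bound on the good event would produce $t/n^\gamma$, not $n^\gamma t$.) Finally, the parameter balance that is supposed to produce the specific threshold $t=n^{-\gamma(2A+2)+1/2}$ and the $n^{-A+o(1)}$ tail is asserted rather than carried out, so the sketch does not actually establish \eqref{gc2.4.2}.
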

\begin{ex}
Consider the case where $q$ is a random variable satisfying the 
moment conditions 
\begin{equation}\label{gc2.4.3}
	\erw[ q ] =0, \quad  \erw[|q|^2] =1, \quad \erw[ |q|^4] < +\infty.
\end{equation}
Form \cite{La05} we know that \eqref{gc2.4.3} implies that 
$\erw [ \| Q\| ] \leq C N^{1/2}$, which,    
using Markov's inequality, yields that for any $\varepsilon>0$
\begin{equation}\label{gc2.4.5}
\prob\left[ 
\Vert Q\Vert \geq C N^{1/2+\varepsilon}
\right] 
\leq C^{-1}N^{-1/2- \varepsilon}\erw [ \| Q\| ] 
\leq N^{-\varepsilon}.
\end{equation}
In this case \eqref{gc2.4.2} becomes 
\begin{equation}\label{gc2.4.2b}
	\textbf{P}\left( s_n(X_0 + Q) \leq n^{-(\varepsilon+1/2)(2A+2) +1/2} \right) 
	\leq  C \left( n^{-A + o(1)} + N^{-\varepsilon}\right).
\end{equation}
\end{ex}
\section{Eigenvalue asymptotics for non-selfadjoint (random) operators}
Consider the operator $P_h=p^w(x,hD_x)$ as in \eqref{C1:eq5}, \eqref{C1:eq8}, 
seen as an unbounded operator $L^2(\R^d)\to L^2(\R^d)$. We equip $P_h$ with the 
domain $H(m):=(P_h-z_0)^{-1}L^2(\R^d)$. Note that $(P_h-z_0)^{-1}$ exists for $h>0$ 
small enough by the elipticity condition \eqref{C1:eq5.2}. We will denote 
by $\|u\|_m:= \| (P_h-z_0) u \|$ the associated norm on $H(m)$. Although this norm 
depends on the choice of the symbol $p_0-z_0$, it is equivalent to the norm defined 
by any operator with elliptic principal symbol $q\in S(m)$, so that the space $H(m)$ 
only depends on the order function $m$. Since $H(m)$ contains the 
Schwartz functions $\cS(\R^d)$ it is dense in $L^2(\R^d)$. 
\par
Let us check that $P_h$ equipped with domain $H(m)$ is closed. Let $(P_h-z_0)u_j \to v$ and 
$u_j \to u$ in $L^2$. Since $(P_h-z_0):H(m)\to L^2$ is bijective, it follows that 
$u_j \to (P_h-z_0)^{-1}v$ in $H(m)$ and also in $L^2$. So $u = (P_h-z_0)^{-1}v$. 
Summing up, $P_h$ equipped with domain $H(m)$ is a densely defined closed linear 
operator. 
\\
\par 
Recall \eqref{C1:eq5.1}, and let 
\begin{equation}\label{C2eq:WA1}
	\Omega\Subset \C\backslash \Sigma_\infty
\end{equation}
be not entirely contained in $\Sigma$. Using the ellipticity assumption 
\eqref{C1:eq5.2} it was proven in \cite[Section 3]{HaSj08} that 
\begin{itemize}
	\item $\spec(P_h)\cap \Omega$ is discrete for $h>0$ small enough, 
	\item For all $\varepsilon >0$ there exists an $h(\varepsilon)>0$ such that 
		\begin{equation*}
			\spec(P_h)\cap \Omega \subset \Sigma+ D(0,\varepsilon), \quad 0< h \leq h(\varepsilon),
		\end{equation*}
		where $D(0,\varepsilon)$ denotes the disc in $\C$ of radius $\varepsilon$ and centered at 
		$0$. 
\end{itemize}
\subsection{The selfadjoint setting}
If $P_h$ above is selfadjoint, which implies in particular that $p$ is real-valued, 
we have the classical Weyl asymptotics. We follow here \cite{DiSj99} 
for a brief review. 
\begin{thm}\label{thm:WeylAsymptotics1}
	Let $\Omega$ be as in \eqref{C2eq:WA1}. 
	For every $h$-independent interval $I\subset \Omega\cap \R$ with 
	$\Vol_{\R^{2d}}(\partial I) =0$, 
	\begin{equation}
		\#(\spec(P_h)\cap I) = \frac{1}{(2\pi h)^d}\left(\int_{p_0^{-1}(I)} dx d\xi +o(1)\right), 
		\quad h\to 0.
	\end{equation}
\end{thm}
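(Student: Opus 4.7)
The plan is to reduce the eigenvalue counting to a trace computation via smooth cutoffs. First I would fix a relatively compact neighborhood $\Omega'\Subset \Omega$ containing $I$ together with a smooth cutoff $\theta\in C_c^\infty(\Omega\cap\R;[0,1])$ equal to $1$ on a neighborhood of $\overline{\Omega'}\cap\R$. By the ellipticity assumption \eqref{C1:eq5.2} together with $\Omega\Subset\C\setminus\Sigma_\infty$, the preimage $p_0^{-1}(I)$ is compact in $T^*\R^d$, the resolvent $(P_h-z)^{-1}$ exists and is controlled in Hilbert-Schmidt/trace norm on compact subsets of $\Omega\setminus\spec(P_h)$, and $\spec(P_h)\cap\Omega$ is discrete for small $h$, as recalled before the theorem. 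Self-adjointness then gives the spectral resolution, so for any $\chi\in C_c^\infty(\Omega\cap\R)$ the operator $\chi(P_h)$ is trace class with
\begin{equation*}
  \tr\chi(P_h)=\sum_{\lambda\in\spec(P_h)}\chi(\lambda).
\end{equation*}

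The key analytic input is the Helffer-Sj\"ostrand formula
\begin{equation*}
  \chi(P_h)=-\frac{1}{\pi}\int_{\C}\bar\partial\tilde\chi(z)\,(z-P_h)^{-1}\,L(dz),
\end{equation*}
with $\tilde\chi$ an almost-analytic extension of $\chi\theta$. Using the semiclassical pseudo-differential calculus of \cite{DiSj99} (stationary phase in the symbol of $(z-P_h)^{-1}$ together with the composition formula) one expands the trace to get
\begin{equation*}
  \tr\chi(P_h)=\frac{1}{(2\pi h)^d}\int_{T^*\R^d}\chi(p_0)\,dx\,d\xi\;+\;\mathcal{O}(h^{1-d}),
\end{equation*}
as $h\to 0$. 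This is the standard semiclassical trace expansion; all corrections are of relative order $h$, and compact support of $p_0^{-1}(\supp\chi)$ ensures the subleading integrals converge.

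To upgrade from smooth $\chi$ to the characteristic function $\mathbb{1}_I$, I would sandwich $\mathbb{1}_I$ between smooth cutoffs: for each $\delta>0$ choose $\chi^\pm_\delta\in C_c^\infty(\Omega\cap\R)$ with $\chi^-_\delta\le\mathbb{1}_I\le\chi^+_\delta$ and $\chi^+_\delta-\chi^-_\delta$ supported in a $\delta$-neighborhood of $\partial I$. By the spectral theorem, for any eigenvalue $\lambda$ one has $\chi^-_\delta(\lambda)\le\mathbb{1}_I(\lambda)\le\chi^+_\delta(\lambda)$, hence
\begin{equation*}
  \tr\chi^-_\delta(P_h)\;\le\;\#(\spec(P_h)\cap I)\;\le\;\tr\chi^+_\delta(P_h).
\end{equation*}
Applying the trace expansion to both sides gives, after multiplication by $(2\pi h)^d$,
\begin{equation*}
  \int_{p_0^{-1}(I)}dx\,d\xi-\varepsilon(\delta)+\mathcal{O}(h)\;\le\;(2\pi h)^d\#(\spec(P_h)\cap I)\;\le\;\int_{p_0^{-1}(I)}dx\,d\xi+\varepsilon(\delta)+\mathcal{O}(h),
\end{equation*}
where $\varepsilon(\delta)$ is bounded by the Liouville volume of $p_0^{-1}$ of a $\delta$-neighborhood of $\partial I$.

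The main obstacle is controlling $\varepsilon(\delta)$ as $\delta\to 0$: one needs the Liouville measure of $p_0^{-1}(\partial I)$ to vanish, which is precisely the hypothesis $\Vol_{\R^{2d}}(\partial I)=0$ (read as a vanishing measure condition for the pushed-forward Liouville measure on the boundary, i.e.\ $p_0$ has no critical level on $\partial I$ accumulating mass). Under this assumption, taking first $h\to 0$ and then $\delta\to 0$ yields the claimed asymptotic. A subtle technical point is verifying that the remainder in the trace expansion is uniform as $\chi$ varies over the family $\chi^\pm_\delta$; this is handled by the standard observation that the seminorms of $\tilde\chi^\pm_\delta$ enter only through a fixed compact set of phase space where ellipticity of $P_h-z_0$ provides uniform resolvent bounds, so that the $\mathcal{O}(h)$ error is independent of $\delta$ and can be absorbed before sending $\delta\to 0$.
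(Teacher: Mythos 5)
Your proposal is correct and follows exactly the standard route (Helffer--Sj\"ostrand functional calculus, semiclassical trace expansion for $\chi(P_h)$ with $\chi\in C_c^\infty$, then sandwiching the spectral projector $\mathbf{1}_I(P_h)$ between smooth cutoffs and using that $p_0^{-1}(\partial I)$ has vanishing Liouville volume) taken by the references the paper cites for this result ([DiSj99], Chazarain, Helffer--Robert, Ivrii); the paper itself offers no proof beyond these citations. One small inaccuracy: the $\mathcal{O}(h)$ remainder in the trace expansion is \emph{not} uniform in $\delta$ (the seminorms of the almost-analytic extensions of $\chi^\pm_\delta$ blow up as $\delta\to 0$), but this is harmless because your iterated limit --- first $h\to 0$ at fixed $\delta$, then $\delta\to 0$ --- never needs that uniformity to produce the claimed $o(1)$ error.
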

This result is, in increasing generality, due to Chazarin \cite{Ch80}, Helffer and Robert \cite{HeRo81,HeRo83}, 
Petkov and Robert \cite{PeRo85} and Ivrii \cite{Iv98}. See also \cite{DiSj99} for an overview. We highlight 
two special cases: when $I=[a,b]$, $a<b$, and $a,b$ are not critical points of $p_0$, then 
the error term becomes $\mO(h)$, see Chazarin \cite{Ch80}, Helffer-Robert \cite{HeRo81} and Ivrii \cite{Iv98}. When 
additionally the unions of periodic $H_{p_0}$ trajectories\footnote{$H_{p_0}$ denotes the Hamilton vector field induced 
by $p_0$.} in the energy shell $p_0^{-1}(a)$ and 
$p_0^{-1}(b)$ are of Liouville measure $0$, then the error term is of the form 
\begin{equation}
	 h \left(
		\int_{p_0=a} p_1(\rho ) L_a(d\rho) - 
		\int_{p_0=b} p_1(\rho ) L_b(d\rho) 
	 \right) + o(h), 
\end{equation}
where $L_\lambda$ denotes the Liouville measure on $p^{-1}_0(\lambda)$. See Petkov and Robert \cite{PeRo85} 
and Ivrii \cite{Iv98}, as well as \cite{DiSj99}, for details. Let us also highlight that similar results to 
Theorem \ref{thm:WeylAsymptotics1} also hold on compact smooth manifolds, see for instance 
\cite[Chapter 12]{GrSj94} and the references therein. 
\par
The corresponding results in the setting of selfadjoint partial differential 
operators in the high energy limit go back to the seminal work of Weyl 
\cite{We12} and have a long and very rich history. These are however beyond 
the scope of this review. 
\begin{ex}\label{ex:HarmonicOscillator}
	The guiding example to keep in mind is the selfadjoint Harmonic oscillator 
	\begin{equation*}
	   P_h = (hD_x)^2 + x^2:~L^2(\R)\to L^2(\R)
	\end{equation*}
	seen as an unbounded operator. The principal symbol 
	of $P_h$ is given by $p(x,\xi)=\xi^2+x^2\in S(T^*\R,m)$, with weight 
	function $m(x,\xi) = 1+ \xi^2 + x^2$. We equip $P_h$ with the domain 
	$H(m):=(P_h+1)^{-1}L^2(\R)$, where the operator on the right is the 
	pseudo-differential inverse of $P_h+1$. This choice of domain makes $P_h$ 
	a closed densely defined operator. It is well-known (see for instance 
	\cite[Theorem 6.2]{Zw12}) that the spectrum of $P_h$ is given by 
	\begin{equation*}
	   \spec(P_h) = \{(2n+1)h;n\in\N\}.
	\end{equation*}
	Counting the points $(2n+1)h$ contained in an interval $[a,b]$, $0\leq a<b<\infty$, 
	gives 
	\begin{equation*}
		\#(\spec(P_h)\cap [a,b]) = \frac{b-a}{2h} + \mO(1).
	 \end{equation*}
	Since $\mathrm{Vol}_{\R^2}(\{a\leq \xi^2+x^2\leq b\}) = \pi (b-a)$, we confirm 
	Theorem \ref{thm:WeylAsymptotics1} for the Harmonic oscillator. 
\end{ex}
\subsection{The non-selfadjoint setting}\label{C2sec:NSAsetting}
The natural counterpart of Theorem \ref{thm:WeylAsymptotics1} for non-selfadjoint 
operators would be eigenvalue asymptotics in a complex domain $\Omega\Subset\C$ 
as in \eqref{C2eq:WA1}. Recall the non-selfadjoint Harmonic oscillator $P_h$ from 
Example \ref{ex:DaviesOperator} with principal symbol $p(x,\xi)=\xi^2+ix^2$. 
In this case $\Sigma=\{z\in\C; \Rea z,\Ima z\geq 0\}$ and $\Sigma_\infty=\emptyset$. Any  
$\emptyset \neq \Omega\Subset \Sigma$ away from the line $\e^{i\pi/4}\R_+$ gives in view of 
\eqref{C1eq:DaviesOp1} that 
\begin{equation*}
	\#(\spec(P_h)\cap \Omega) = 0.
\end{equation*}
On the other hand 
\begin{equation*}
	\frac{1}{2\pi h}\int_{p^{-1}(\Omega)} dx d\xi >0.
\end{equation*}
This example suggests that a direct generalization of Theorem 
\ref{thm:WeylAsymptotics1} to non-selfadjoint operators with complex 
valued principal symbol cannot hold. 
\\
\par
%Melin and Sj\"ostrand \cite{MeSj03}, 
%
Let us comment on two settings where a form of Weyl asymptotics is 
known to hold: Upon assuming analyticity, one may recover a sort of Weyl asymptotics. 
More precisely, as shown in the works of Melin and Sj\"ostrand \cite{MeSj03}, Sj\"ostrand \cite{Sj03}, 
Hitrik and Sj\"ostrand \cite{HS04,HS05,HS08}, Hitrik, Sj\"ostrand and V{\~u} Ng{\d o}c \cite{HSN07} and 
Rouby \cite{Ro18}, the discrete spectrum of certain analytic non-selfadjoint 
pseudo-differential operators is confined to curves in $\Sigma$. Moreover, one may  
recover eigenvalue asymptotics via Bohr-Sommerfeld quantization conditions. 
\\
\par
The second setting is when the non-selfadjointness of the operator $P_h$ 
comes not from the principal symbol $p_0$ (assumed to be real-valued) but from the 
subprincipal symbol $p_1$. For instance, when studying the damped wave equation on a compact 
Riemannian manifold $X$ one is led to study the eigenvalues of the 
corresponding stationary operator 
\begin{equation*}
	P_h(z) = -h^2\Delta +2ih\sqrt{a(x)}\sqrt{z}, \quad a\in C^\infty(X;\R).
\end{equation*}
Here, $\Delta$ denotes the Laplace-Beltrami operator on $X$ and 
we call $z\in\C$ an eigenvalue of $P_h(z)$ if there exists a corresponding 
$L^2$ function $u$ contained in the kernel of $P_h(z)-z$. Actually, such a 
$u$ is smooth by elliptic regularity. Using Fredholm theory one can show 
that these eigenvalues form a discrete set in $\C$. 
\par
The principal part of $P_h=P_h(z)$ is given by $-h^2\Delta$, and 
thus is self-adjoint, with principal symbol is $p_0(x,\xi)=|\xi|^2_x$ 
(the norm here is with respect to the Riemannian metric on $X$). However, 
the subprincipal part is complex valued and non-selfadjoint. 
\par
Lebeau \cite{Le96} established that there exist $a_\pm\in \R$ such that 
for every $\varepsilon>0$ there are only finitely many eigenvalues such 
that 
\begin{equation*}
	\frac{\Ima z}{h} \notin [a_--\varepsilon,a_++\varepsilon].
\end{equation*}
\begin{rem}
	In fact Lebeau provided precise expressions for $a_\pm$ in 
	terms of the infimum and the supremum over the co-sphere bundle $S^*X$ 
	of the long time average of the damping function $a$ evolved via the 
	geodesic flow. Further refinements have been obtained by Sj\"ostrand 
	\cite{Sj00}, and when $X$ is negatively curved by Anantharaman \cite{An10} 
	and Jin \cite{Ji20}.
\end{rem}
Additionally Markus and Matsaev \cite{MaMa82} and Sj\"ostrand \cite{Sj00} 
proved the following analogue of the Weyl law. For $0<E_1< E_2<\infty$ and 
for $C>0$ sufficiently large
\begin{equation}\label{C2eq:WeylAsymptoticsDW}
	\#\big(\spec(P_h)\cap ([E_1,E_2]+ i[-Ch,Ch]) \big) 
	=\frac{1}{(2\pi h)^d}
	\left(\iint_{p_0^{-1}([E_1,E_2])}dxd\xi + \mO(h)\right).
\end{equation}
Finer results have been obtained by Anantharaman \cite{An10} 
and Jin \cite{Ji20} when $X$ is negatively curved. 

\subsection{Probabilistic Weyl asymptotics}
In a series of works by Hager \cite{Ha06,Ha06b,HaSj08} and 
Sj\"ostrand \cite{Sj09,Sj10}, the authors proved a Weyl law, 
with overwhelming probability, for the eigenvalues in a compact 
set $\Omega\Subset \C$ as in \eqref{C2eq:WA1} for randomly 
perturbed operators 
\begin{equation}\label{C2eq:PertOp1}
	P^{\delta} = P_h+\delta Q_{\omega},\quad 0<\delta=\delta(h)\ll1,
\end{equation}
where $P_h$ is as in Section \ref{sec:SpecInstaSemPseudo} and 
the random perturbation $Q_\omega$ is one of the following two types. 
\\[2ex]
\textbf{Random Matrix.} 
Let $N(h)\to \infty$ sufficiently fast as $h\to 0$. Let $q_{j,k}$, 
$0\leq j,k <N(h)$ be independent copies of a complex Gaussian random 
variable $\alpha\sim \mathcal{N}_\C(0,1)$. We consider the random matrix
\begin{equation}\label{C2eq:PertOp2}
	Q_{\omega}=\sum_{0\leq j,k<N(h)} q_{j,k}\, e_j \otimes e_k^*,
\end{equation}
where $\{e_j\}_{j\in\N}\subset L^2(\R^d)$ is an orthonormal basis and 
$e_j \otimes e_k^*u=(u|e_k)e_j$ for $u\in L^2(\R)$. The condition on $N(h)$ 
is determined by the requirement that the microsupport of the vectors in the 
orthonormal system $\{e_j\}_{j< N(h)}$ ``covers'' the compact set 
$p_0^{-1}(\Omega)\subset T^*\R^d$, where $p_0$ is the principal symbol of $P_h$. 
For instance, we could take the first $N(h)$ eigenfunctions (ordered according to 
increasing eigenvalues) of the Harmonic oscillator $P_h = -h^2\Delta + x^2$ on $\R^d$. 
The number $N(h)$ is then determined by the condition that the semiclassical wavefront sets 
of $e_{j}$, $j\geq N(h)$, are disjoint from $p_0^{-1}(\Omega)$. 
Alternatively, as in \cite{HaSj08}, one may 
take $N(h)=\infty$, however then one needs to conjugate $Q_\omega$ by suitable 
elliptic Hilbert-Schmidt operators. We refer to \cite{HaSj08} for more details. 
\\[2ex]
\textbf{Random Potential.} We take $N(h)$ and an orthonormal family 
$(e_k)_{k\in\N}$ as above. Let $v$ be real or complex random 
vector in $\R^{N(h)}$ or $\C^{N(h)}$, respectively, with joint probability law
\begin{equation}\label{C2eq:PertOp3.0}
	v_*(d \prob) =  
	Z_h^{-1}\, \mathbf{1}_{B(0,R) }(v) \,\e^{\phi(v)} L(dv),
\end{equation} 
where $Z_h>0$ is a normalization constant, $B(0,R)$ is either the real 
ball $\Subset \R^{N(h)}$ or the complex ball $\Subset \C^{N(h)}$ of radius 
$R =R(h)\gg 1$, and centered at $0$, $L(dv)$ denotes the Lebesgue
measure on either $\R^{N(h)}$ or $\C^{N(h)}$ and $\phi \in C^1$ with 
\begin{equation}\label{C2eq:PertOp3.1}
	\| \nabla_v \phi \| = \mO(h^{- \kappa_4}) 
\end{equation}
uniformly, for an arbitrary but fixed $\kappa_4\geq 0$. In \cite{Ha06} 
the case of non-compactly supported probability law was considered. More 
precisely, the entries of the random vector $v$ were supposed to be 
independent and identically distributed (iid) complex Gaussian random variables 
$\sim \mathcal{N}_\C(0,1)$. In \cite{Sj09,Sj10}, the law \eqref{C2eq:PertOp3.0} 
was considered. For the sake 
of simplicity we will not detail here the precise conditions on the 
$e_k$, $R(h)$, and $N(h)$, in this case but refer the reader to \cite{Sj09,Sj10}. 
However, one example of a random vector $v$ with law \eqref{C2eq:PertOp3.1}
is a truncated complex or real Gaussian random variables with expectation 
$0$, and uniformly bounded covariances. In fact, the methods in \cite{Sj09,Sj10} 
can be extended to non-compactly supported probability distributions, provided 
sufficient decay conditions at infinity are assumed. For instance iid complex 
Gaussian random variables, as in the one dimensional case \cite{Ha06}, are 
permissable. Finally, we remark 
that the methods in \cite{Sj09,Sj10} can probably also be modified to allow 
for the case of more general independent and identically distributed random 
variables. 
\par
We define the random function  
\begin{equation}\label{C2eq:PertOp3}
	V_{\omega}=\sum_{0\leq j<N(h)} v_{j}\, e_j.
\end{equation}
We call this perturbation a ``random potential'', even though $V_\omega$ is 
complex valued. When we consider this type of perturbation, we will make the 
additional symmetry assumption:
\begin{equation}\label{C2eq:PertOp4}
	p(x,\xi;h)=p(x,-\xi;h).
\end{equation}
%
% given in \eqref{eq1.11} and \eqref{eq1.14}. Under more restrictive assumptions 
%on the random variables than \eqref{eq1.9}, 
Let $\Omega\Subset \C$ be an open simply connected set as in \eqref{C2eq:WA1}. 
For $z\in \Omega$ and $0\leq  t \ll1 $ we set 
\begin{equation}\label{C2eq:PertOp5}
		V_z(t) = \mathrm{Vol}\{\rho \in T^*\R^d; |p_0(\rho) -z |^2 \leq t \}.
\end{equation}
Let $\Gamma\Subset \Omega$ be open with $\mathcal{C}^2$ boundary and 
make the following non-flatness assumption 
\begin{equation}\label{C2eq:PertOp6}
		\exists \kappa \in ]0,1], \text{ such that } V_z(t) = \mO(t^{\kappa}), 
		\text{ uniformly for } z \in \text{neigh}(\partial \Gamma), ~ 0 \leq t \ll 1.
\end{equation}
The above mentioned works shown the following result.
\begin{thm}[Probabilistic Weyl's law]\label{thm:PWL}
Let $\Omega$ be as in \eqref{C2eq:WA1}. Let $\Gamma\Subset\Omega$ 
be open with $\mathcal{C}^2$ boundary. Let $P^{\delta}_h$ be a randomly 
perturbed operators as in \eqref{C2eq:PertOp1} with 
$\e^{-1/Ch} \ll \delta\leq h^\theta$ with $\theta>0$ sufficiently large. 
Then, in the limit $h\to 0$,
\begin{equation}
	\#\big(\spec(P^{\delta}_h)\cap \Gamma\big) = \frac{1}{(2\pi h)^d}
		\left( \iint_{p_0^{-1}(\Gamma)} dxd\xi + o(1) \right)\quad \text{with probability $\geq 1 - C h^{\eta}$},
\end{equation}
for some fixed $\eta>0$.
\end{thm}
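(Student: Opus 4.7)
My approach follows the Grushin-problem / characteristic-determinant strategy pioneered by Hager and developed by Hager--Sj\"ostrand and Sj\"ostrand in the references cited above. First, for $z$ in a complex neighborhood of $\overline{\Gamma}$, I would set up a Grushin problem for $P_h - z$: using the quasimodes $e_{\pm}$ provided by Theorem \ref{thm:DZS} on the two components $\Lambda_{\pm}(p_0)$ meeting $\Gamma$, one constructs finite-rank auxiliary operators $R_\pm$ of rank $M = M(h)$ so that the extended operator
\begin{equation*}
\mathcal{P}^\delta(z) = \begin{pmatrix} P^\delta_h - z & R_- \\ R_+ & 0 \end{pmatrix} : H(m) \oplus \C^M \to L^2(\R^d) \oplus \C^M
\end{equation*}
is bijective with controlled inverse $\mathcal{E}^\delta(z) = \begin{pmatrix} E^\delta & E^\delta_+ \\ E^\delta_- & E^\delta_{-+} \end{pmatrix}$, for $\delta$ in the stated range. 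The Schur complement formula then gives that $D^\delta(z) := \det E^\delta_{-+}(z)$ is holomorphic on $\Omega$ and vanishes (with multiplicities) exactly at the eigenvalues of $P^\delta_h$, reducing the theorem to a counting statement for zeros of $D^\delta$ in $\Gamma$.

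The second step is to derive a sharp a priori upper bound
\begin{equation*}
\log|D^\delta(z)| \leq h^{-d}\bigl(\phi(z) + o(1)\bigr), \qquad \phi(z) := \frac{1}{2(2\pi)^d}\iint_{T^*\R^d}\log\!\bigl(|p_0(\rho)-z|^2+\delta^2\bigr)\, dx\, d\xi,
\end{equation*}
uniform in $z$ on a neighborhood of $\overline{\Gamma}$. This is essentially a Weyl-type trace estimate for singular values: one bounds $\log|D^\delta(z)|$ from above by $\tfrac{1}{2}\tr \log\!\bigl((P^\delta_h-z)^*(P^\delta_h-z)+\delta^2\bigr)$ up to well-controlled finite-rank corrections from the Grushin extension, and then replaces the spectral sum by a phase-space integral of the principal symbol using functional calculus for pseudo-differential operators. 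A direct distributional computation yields $\Delta_z\phi(z) = 2\pi(2\pi)^{-d}(p_0)_*(dx\, d\xi)$, so that by the co-area formula
\begin{equation*}
\int_\Gamma \Delta_z\phi(z)\, L(dz) = \frac{2\pi}{(2\pi)^d}\iint_{p_0^{-1}(\Gamma)} dx\, d\xi.
\end{equation*}

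The heart of the matter, and the main obstacle, is the matching lower bound $\log|D^\delta(z_j)| \geq h^{-d}\bigl(\phi(z_j) - o(1)\bigr)$ on a sufficiently dense grid of test points $\{z_j\}$ in a thin complex neighborhood of $\partial\Gamma$, holding with probability at least $1 - Ch^\eta$. This is where the randomness enters: I would control $|D^\delta(z)|$ from below by a product of the smallest singular values of a finite matrix depending on $\omega$ and invoke a Sankar--Spielman--Teng type anti-concentration estimate in the spirit of Theorem \ref{thm:SSV} --- directly in the random-matrix case \eqref{C2eq:PertOp2}, and in a variant tailored to the distribution \eqref{C2eq:PertOp3.0} in the random-potential case, exploiting \eqref{C2eq:PertOp4} --- to show that each factor is not too small at each $z_j$. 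The delicate balance lies in the grid: it must be fine enough (on a scale polynomial in $\delta$) that a $\overline\partial$-interpolation / subharmonic propagation argument can transport the bound to all of $\widetilde{\Gamma}\Supset\Gamma$, yet coarse enough that a union bound over all $z_j$ preserves the probability $\geq 1 - Ch^\eta$. The lower constraint $\delta \gg \e^{-1/Ch}$ is precisely what is needed for this probabilistic lower bound to absorb the exponentially small remainders from Theorem \ref{thm:DZS}, while $\delta \leq h^\theta$ keeps the perturbed Grushin problem well posed.

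Finally, combining the pointwise lower bound on the grid with the uniform upper bound, and using that $h^d\log|D^\delta|$ is subharmonic, yields convergence $h^d\log|D^\delta(z)| \to \phi(z)$ in $L^1_\loc$ on a neighborhood of $\overline{\Gamma}$, hence also in the distributional sense for $\Delta_z$. Applying the Riesz representation of the zero-counting measure of a holomorphic function,
\begin{equation*}
\#\{z\in\Gamma : D^\delta(z) = 0\} = \frac{1}{2\pi}\int_{\Gamma}\Delta_z\log|D^\delta(z)|\, L(dz) + (\text{boundary contribution}),
\end{equation*}
and substituting the computation of $\Delta_z\phi$ from the second step gives the Weyl asymptotic of the theorem. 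The $\mathcal{C}^2$ regularity of $\partial\Gamma$ together with the non-flatness hypothesis \eqref{C2eq:PertOp6}, which forces H\"older regularity of $\phi$ near $\partial\Gamma$, is what allows the boundary contribution to be absorbed into the $o(1)$ error on the main term.
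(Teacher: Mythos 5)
This paper is a survey: Theorem~\ref{thm:PWL} is stated without proof and attributed to \cite{Ha06,Ha06b,HaSj08,Sj09,Sj10}, so there is no in-paper argument to compare against. Your sketch accurately reconstructs the Grushin-problem / holomorphic-determinant strategy of those references --- reduction to the zeros of $D^\delta(z)=\det E^\delta_{-+}(z)$, an upper bound on $\log|D^\delta|$ by a phase-space integral with $\Delta_z\phi = 2\pi(2\pi)^{-d}(p_0)_*(dxd\xi)$, a probabilistic lower bound on a fine grid via least-singular-value anti-concentration, and a subharmonicity/Jensen counting argument controlled near $\partial\Gamma$ by \eqref{C2eq:PertOp6} --- so it is essentially the same approach. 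One caveat worth noting: for $p_0\in S(m)$ with $m$ unbounded, neither $\iint\log(|p_0-z|^2+\delta^2)\,dxd\xi$ nor $\tr\log\bigl((P^\delta_h-z)^*(P^\delta_h-z)+\delta^2\bigr)$ converges as written; the cited works regularize by working with relative (comparison) determinants, or by confining the trace estimate to the finite-rank Grushin piece, and your sketch should be read with that regularization understood.
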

The works \cite{Ha06,Ha06b,HaSj08,Sj09,Sj10} also provide an explicit control 
over $\theta$, the error term in Weyl's law, and the error term in the 
probability estimate. We illustrate Theorem \ref{thm:PWL} with a numerical 
simulation in Figure \ref{PWL.fig1} below. 
\begin{figure}[ht]
\centering
 \begin{minipage}[b]{0.45\linewidth}
  \centering
  \includegraphics[width=\textwidth]{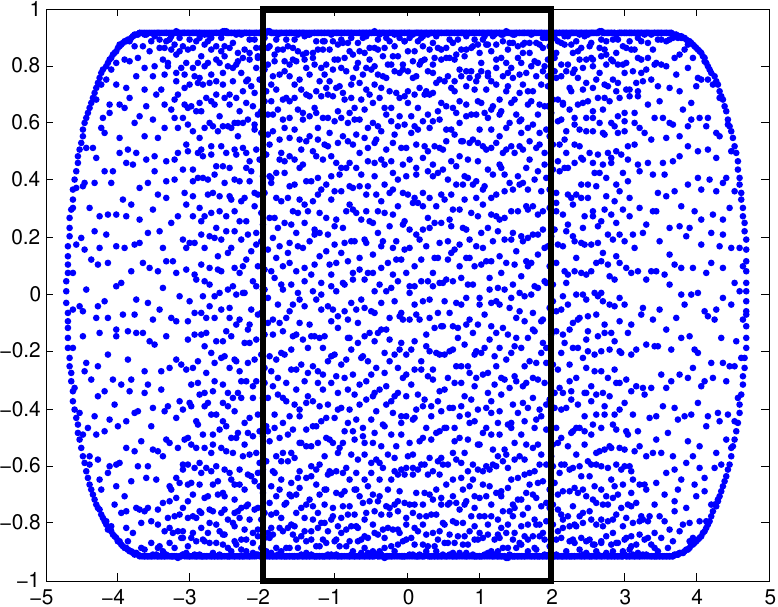}
 \end{minipage}
 \hspace{0.1cm}
 \begin{minipage}[b]{0.45\linewidth}
  \centering 
  \includegraphics[width=\textwidth]{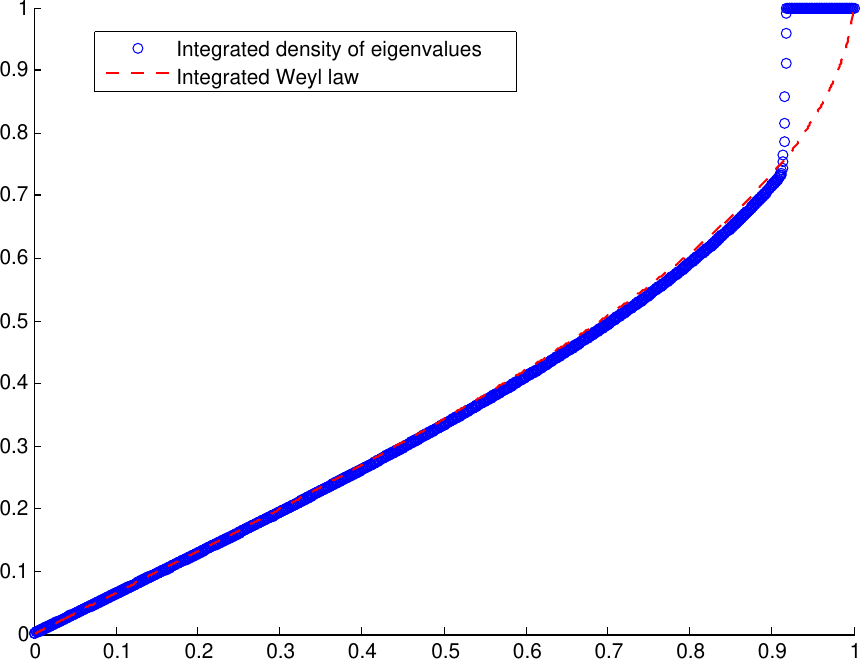}
 \end{minipage}
 \caption{The left hand side depicts the spectrum of a discretization of 
	$P_h=hD_x + \exp(-ix)$, $x\in S^1$, (approximated by a $N\times N$-matrix, 
	$N=3999$) perturbed with a random Gaussian matrix $\delta Q_\omega$ with 
	$h\asymp N^{-1}$ and $\delta\asymp N^{-4}$. The spectrum of the perturbed 
	operator differs strongly from the spectrum of the unperturbed operator 
	which is given by$\{hk;k\in \Z\}$. The right hand side shows the integrated 
	experimental density of eigenvalues (as a function 
	of the imaginary part) in the black box, averaged over $400$ realizations of 
	random Gaussian matrices. The red line shows the integrated Weyl 
	law which coincides with the experimental density in the interior 
	of the classical spectrum $\Sigma$. The two densities differ close to 
	boundary\footref{fn3} $\partial\Sigma$. This figure stems form 
	\cite{Vo17a}.}\label{PWL.fig1}
\end{figure}
\par
\footnotetext[3]{For finer results concerning the brake-down of the Weyl law 
on a mesoscopic scale close to the pseudospectral boundary we refer the reader 
to \cite{Vo17a}.\label{fn3}}
Theorem \ref{thm:PWL} is remarkable because 
such Weyl laws are typically a feature of selfadjoint operator, whereas in 
the non-selfadjoint case they generally fail. Indeed, as laid out in Section 
\ref{C2sec:NSAsetting}, the discrete spectrum of the (unperturbed) non-selfadjoint 
operator $P_h$ is usually localized to curves in the pseudospectrum $\Sigma$, see 
\cite{MeSj03,HS04,HS05,HS08,HSN07,Ro18}. In contrast, Theorem \ref{thm:PWL} 
shows that a ``generic'' perturbation of size $\mathcal{O}(h^{\infty})$ is sufficient for the 
spectrum to ``fill out'' $\Sigma$. 
\par 
To illustrate this phenomenon recall the non-selfadjoint harmonic oscillator 
$P_h  = -h^2\partial_x^2 + ix^2$ on $\R$ from Example \ref{ex:DaviesOperator}.  
Its spectrum is given by $\{\e^{i\pi/4}(2n+1)h; n\in\N\}$ \cite{Da99b} on the 
line $\e^{i\pi/4}\R_+\subset \C$. The Theorem \ref{thm:PWL} shows that a 
``generic'' perturbation of arbitrarily small size is sufficient to produce 
spectrum roughly equidistributed in any fixed compact set in its classical 
spectrum $\Sigma$, which is in this case the upper right quadrant of $\C$. 
\\
\par
As observed in \cite{ChZw10} for real analytic $p$ condition 
\eqref{C2eq:PertOp6} always holds for some $\kappa >0$. Similarly, when $p$ is 
real analytic and such that $\Sigma\subset\C$ has non-empty interior, 
then 
\begin{equation}\label{ke1}
	\forall z\in\partial\Omega: ~
	dp \!\upharpoonright_{p^{-1}(z)} \neq 0 \quad \Longrightarrow \quad 
	\eqref{C2eq:PertOp6} \text{ holds with } \kappa > 1/2.
\end{equation}
For smooth $p$ we have that when for every $z\in\partial\Omega$ 
\begin{equation}\label{ke2}
\begin{split}
	&dp, d\overline{p} \text{ are linearly independent at every point of } p^{-1}(z),  \\
	&\text{then }\eqref{C2eq:PertOp6} \text{ holds with } \kappa = 1.
\end{split}
\end{equation}
Observe that $dp$ and $d\overline{p}$ are linearly independent at $\rho$ when 
 $\{p,\overline{p}\}(\rho)\neq 0$, 
 where $\{a,b\} = \partial_\xi a \cdot \partial_x b -  \partial_x a \cdot \partial_\xi b$ 
denotes the Poisson bracket. Moreover, in dimension $d=1$ 
the condition $\{p,\overline{p}\}\neq 0$ on $p^{-1}(z)$ is equivalent to 
$dp$, $d\overline{p}$ being linearly independent at every point of $p^{-1}(z)$. 
However, in dimension $d>1$ this cannot in hold general as the integral of 
$\{p,\overline{p}\}$ with respect to the Liouville measure on $p^{-1}(z)$ vanishes 
on every compact connected component of $p^{-1}(z)$, see \cite[Lemma 8.1]{MeSj02}. 
Furthermore, condition \eqref{ke2} cannot hold when $z\in \partial\Sigma$. 
However, some iterated Poisson bracket may not be zero there. For example, 
it was observed in \cite[Example 12.1]{HaSj08} that if
\begin{equation}\label{ke3}
	\forall \rho \in p^{-1}(\partial \Omega): ~\{p,\overline{p}\}(\rho) \neq 0 \text{ or }
	\{p,\{p,\overline{p}\}\} (\rho) \neq 0, 
	\text{ then }
	\eqref{C2eq:PertOp6} \text{ holds with } \kappa = \frac{3}{4}.
\end{equation}
\textbf{Related results.}
Theorem \ref{thm:PWL} has also been extended to case of elliptic semiclassical 
differential operators on compact manifolds by Sj\"ostrand \cite{Sj10}, to Toeplitz 
quantization of the torus by Christiansen and Zworski \cite{ChZw10} and the 
author \cite{Vo20}, and to general Berezin-Toeplitz 
quantizations on compact K\"ahler manifolds by Oltman \cite{Ol23} in the 
case of complex Gaussian noise. A further extension of Theorem \ref{thm:PWL} 
has been achieved by Becker, Oltman and the author in \cite{BOV24}. There 
we prove a probabilistic Weyl law for the non-selfadjoint off-diagonal operators  
of the Bistritzer-MacDonald Hamiltonian \cite{BM11} for twisted bilayer 
graphene, see also \cite{CGG,Wa22}, subject to random tunneling potentials. 
This probabilistic Weyl has an interesting physical consequence as it shows 
the instability of the so-called \emph{magic angels} for this model of twisted 
bilayer graphene.  
\par 
Similar results have been obtained in random matrix theory. 
The case of Toeplitz matrices given by symbols on $\T^2$ of the form 
$\sum_{n\in\Z} a_n\e^{in\xi}$, $(x,\xi)\in \T^2$, 
was studied in a series of recent works by {\'S}niady \cite{Sn02}, 
Davies and Hager \cite{DaHa09}, Guionnet, Wood and Zeitouni \cite{GuMaZe14}, 
Basak, Paquette and Zeitouni \cite{BPZ18, BPZ19}, Sj\"ostrand and the author 
of this text \cite{SjVo15b,SjVo19a,SjVo19b}. Such symbols amount to the case of symbols which are 
constant in the $x$ variable. In these works the non-selfadjointness of the problem does however not 
come from the symbol itself but from boundary conditions destroying the periodicity of the symbol 
in $x$ by allowing for a discontinuity. Nevertheless, these works show that by adding some small random 
matrix the limit of the empirical eigenvalues counting measure $\mu_N$ of the perturbed operator converges 
in probability (or even almost surely in some cases) to $p_*(d\rho)$. 
\providecommand{\bysame}{\leavevmode\hbox to3em{\hrulefill}\thinspace}
\providecommand{\MR}{\relax\ifhmode\unskip\space\fi MR }
% \MRhref is called by the amsart/book/proc definition of \MR.
\providecommand{\MRhref}[2]{%
  \href{http://www.ams.org/mathscinet-getitem?mr=#1}{#2}
}
\providecommand{\href}[2]{#2}


\begin{thebibliography}{10}

\bibitem{AgCo71}
J.~Aguilar and J.-M. Combes, \emph{A class of analytic perturbations for
  one-body schr\"odinger hamiltonians}, Commun. Math. Phys. \textbf{22} (1971),
  269--279.

\bibitem{An10}
N.~Anantharaman, \emph{Spectral deviations for the damped wave equation}, Geom.
  Funct. Anal. \textbf{20} (2010), no.~3.

\bibitem{BaCo71}
E.~Balslev and J.-M. Combes:, \emph{{Spectral properties of many-body
  Schr\"odinger operators with dilation analytic interactions}}, Comm. Math.
  Phys. \textbf{22} (1971), 280--294.

\bibitem{BPZ19}
A.~Basak, E.~Paquette, and O.~Zeitouni, \emph{Regularization of non-normal
  matrices by gaussian noise - the banded toeplitz and twisted toeplitz cases},
  Forum of Mathematics, Sigma 7, paper e3 (2019).

\bibitem{BPZ18}
\bysame, \emph{{Spectrum of random perturbations of Toeplitz matrices with
  finite symbols}}, Trans. Amer. Math. Soc. \textbf{373} (2020), no.~7,
  4999--5023.

\bibitem{BOV24}
S.~Becker, I.~Oltman, and M.~Vogel, \emph{Absence of small magic angles for
  disordered tunneling potentials in twisted bilayer graphene}, preprint
  https://arxiv.org/abs/2402.12799 (2024).

\bibitem{BM11}
R.~Bistritzer and A.~MacDonald, \emph{Moir\'e bands in twisted double-layer
  graphene}, PNAS \textbf{108} (2011), 12233--12237.

\bibitem{Bo02}
L.~Boulton, \emph{{Non-self-adjoint harmonic oscillator, compact semigroups and
  pseudospectra}}, J. Operator Theory \textbf{47} (2002), no.~2, 413--429.

\bibitem{CGG}
E.~Canc{\`e}s, L.~Garrigue, and D.~Gontier, \emph{A simple derivation of
  moir{\'e}-scale continuous models for twisted bilayer graphene}, Phys. Rev. B
  \textbf{107} (2023), no.~155403.

\bibitem{Ch80}
J.~Chazarain, \emph{Spectre d'un hamiltonien quantique et m\'ecanique
  classique}, Comm. in PDE \textbf{5} (1980), 595--644.

\bibitem{ChZw10}
T.J. Christiansen and M.~Zworski, \emph{{Probabilistic Weyl laws for quantized
  tori}}, Comm. Math. Phys. \textbf{299} (2010), 305--334.

\bibitem{Co18}
N.~Cook, \emph{Lower bounds for the smallest singular value of structured
  random matrices}, Ann. Probab. \textbf{46} (2018), no.~6, 3442--3500.

\bibitem{Da99}
E.~B. Davies, \emph{{Pseudo{\textendash}spectra, the harmonic oscillator and
  complex resonances}}, Proc. of the Royal Soc.of London A \textbf{455} (1999),
  no.~1982, 585--599.

\bibitem{Da99b}
\bysame, \emph{{Semi-classical states for non-self-adjoint Schr{\"o}dinger
  operators}}, Comm. Math. Phys (1999), no.~200, 35--41.

\bibitem{DaHa09}
E.~B. Davies and M.~Hager, \emph{{Perturbations of Jordan matrices}}, J.
  Approx. Theory \textbf{156} (2009), no.~1, 82--94.

\bibitem{NSjZw04}
N.~Dencker, J.~Sj{\"o}strand, and M.~Zworski, \emph{{Pseudospectra of
  semiclassical (pseudo-) differential operators}}, Communications on Pure and
  Applied Mathematics \textbf{57} (2004), no.~3, 384--415.

\bibitem{DiSj99}
M.~Dimassi and J.~Sj{\"o}strand, \emph{{Spectral Asymptotics in the
  Semi-Classical Limit}}, {London Mathematical Society Lecture Note Series
  268}, Cambridge University Press, 1999.

\bibitem{DyZw19}
S.~Dyatlov and M.~Zworski, \emph{{Mathematical Theory of Scattering
  Resonances}}, American Mathematical Society, 2019.

\bibitem{TrEm05}
M.~Embree and L.~N. Trefethen, \emph{{Spectra and Pseudospectra: The behavior
  of nonnormal matrices and operators}}, Princeton University Press, 2005.

\bibitem{Ga14}
J.~Galkowski, \emph{{Pseudospectra of semiclassical boundary value problems}},
  J. of the Inst. of Math. of Jussieu (2014), no.~2, 405--449.

\bibitem{GrSj94}
A.~Grigis and J.~Sj{\"o}strand, \emph{{Microlocal Analysis for Differential
  Operators}}, {London Mathematical Society Lecture Note Series 196}, Cambridge
  University Press, 1994.

\bibitem{GuMaZe14}
A.~Guionnet, P.~M. Wood, and O.~Zeitouni, \emph{Convergence of the spectral
  measure of non-normal matrices}, Proc.~AMS \textbf{142} (2014), no.~2,
  667--679.

\bibitem{Ha06b}
M.~Hager, \emph{{Instabilit{\'e} Spectrale Semiclassique d{\rq}Op{\'e}rateurs
  Non-Autoadjoints II}}, Annales Henri Poincare \textbf{7} (2006), 1035--1064.

\bibitem{Ha06}
\bysame, \emph{{Instabilit{\'e} spectrale semiclassique pour des op{\'e}rateurs
  non-autoadjoints I: un mod{\`e}le}}, Annales de la facult{\'e} des sciences
  de Toulouse S{\'e}. 6 \textbf{15} (2006), no.~2, 243--280.

\bibitem{HaSj08}
M.~Hager and J.~Sj{\"o}strand, \emph{{Eigenvalue asymptotics for randomly
  perturbed non-selfadjoint operators}}, Mathematische Annalen \textbf{342}
  (2008), 177--243.

\bibitem{HeRo81}
B.~Helffer and D.~Robert, \emph{{Comportement semi-classique du spectre des
  hamiltoniens quantiques elliptiques}}, Ann. Inst. Fourier, Grenoble
  \textbf{31} (1981), no.~3, 169--223.

\bibitem{HeRo83}
\bysame, \emph{Calcul fonctionnel par la transformation de mellin et
  op\'erateurs admissibles}, J. Funct. Anal. \textbf{53} (1983), no.~3,
  246--268.

\bibitem{HS04}
M.~Hitrik and J.~Sj\"ostrand, \emph{{Non-selfadjoint perturbations of
  selfadjoint operators in 2 dimensions I}}, Ann. Henri Poincar\'e \textbf{5}
  (2004), 1--73.

\bibitem{HS05}
\bysame, \emph{{Non-selfadjoint perturbations of selfadjoint operators in 2
  dimensions II. Vanishing averages}}, Comm. Partial Differential Equations
  \textbf{30} (2005), 1065--1106.

\bibitem{HS08}
\bysame, \emph{{Non-selfadjoint perturbations of selfadjoint operators in 2
  dimensions III a. One branching point}}, Canadian J. Math \textbf{60} (2008),
  572--657.

\bibitem{HSN07}
M.~Hitrik, J.~Sj\"ostrand, and S.~V{\~u} Ng{\d o}c, \emph{{Diophantine tori and
  spectral asymptotics for non-selfadjoint operators}}, Amer. J. Math.
  \textbf{129} (2007), 105--182.

\bibitem{Hoe60}
L.~H{\"o}rmander, \emph{{Differential Equations without Solutions}}, Math.
  Annalen (1960), no.~140, 169--173.

\bibitem{Hoe60b}
\bysame, \emph{{Differential operators of principal type}}, Math. Annalen
  (1960), no.~140, 124--146.

\bibitem{Ho85}
\bysame, \emph{{The Analysis of Linear Partial Differential Operators IV}},
  {Grundlehren der mathematischen Wissenschaften}, vol. 275, Springer-Verlag,
  1985.

\bibitem{Iv98}
V.~Ivrii, \emph{{Microlocal analysis and precise spectral asymptotics}},
  {Monographs in Math.}, Springer, 1998.

\bibitem{Ji20}
L.~Jin, \emph{{Damped wave equations on compact hyperbolic surfaces}}, Comm.
  Math. Phys. \textbf{373} (2020), 771--794.

\bibitem{La05}
R.~Latala, \emph{Some estimates of norms of random matrices}, Proc. Amer. Math.
  Soc. \textbf{133} (2005), no.~5, 1273--1282.

\bibitem{Le96}
G.~Lebeau, \emph{{\'E}quation des ondes amorties}, Algebraic and Geometric
  Methods in Mathematical Physics (Kaciveli, 1993), Math. Phys. Stud. 19,
  Kluwer Acad. Publ., Dordrecht (1996).

\bibitem{Lind76}
G.~Lindblad, \emph{On the generators of quantum dynamical semigroups}, Commun.
  Math. Phys. \textbf{48} (1976), 119--130.

\bibitem{MaMa82}
A.S. Markus and V.I. Matsaev, \emph{Comparison theorems for spectra of linear
  operators and spectral asymptotics}, Trudy Moskov. Mat. Obshch. 45 (1982),
  133--181.

\bibitem{Ma02}
A.~Martinez, \emph{An introduction to semiclassical and microlocal analysis},
  Springer, 2002.

\bibitem{MeSj02}
A.~Melin and J.~Sj{\"o}strand, \emph{Determinants of pseudodifferential
  operators and complex deformations of phase space}, Methods Appl. Anal.
  \textbf{9} (2002), no.~2, 177--237.

\bibitem{MeSj03}
\bysame, \emph{{Bohr-Sommerfeld quantization condition for non-selfadjoint
  operators in dimension 2}}, Ast\'erisque \textbf{284} (2003), 181--244.

\bibitem{Ol23}
I.~Oltman, \emph{A probabilistic weyl-law for berezin-toeplitz operators},
  Journal of Spectral Theory \textbf{13} (2023), no.~2, 727--754.

\bibitem{PeRo85}
V.~Petkov and D.~Robert, \emph{Asymptotique semi-classique du spectre
  d'hamiltoniens quantiques et trajectoires classiques p\'eriodiques}, Comm. in
  PDE \textbf{10} (1985), no.~4, 365--390.

\bibitem{Pr03}
K.~Pravda-Starov, \emph{{A general result about the pseudo-spectrum of
  Schr{\"o}dinger operators }}, Proc. R. Soc. London Ser. A Math. Phys. Eng.
  Sci. \textbf{460} (2004), 471--477.

\bibitem{Pr06}
\bysame, \emph{{{\'E}tude du pseudo-spectre d'op{\'e}rateurs non
  auto-adjoints}}, Ph.D. thesis, 2006.

\bibitem{Pr08}
\bysame, \emph{{Pseudo-spectrum for a class of semi-classical operators}},
  Bull. Soc. Math. France \textbf{136} (2008), no.~3, 329--372.

\bibitem{Ro18}
O.~Rouby, \emph{{Bohr-Sommerfeld quantization conditions for non-selfadjoint
  perturbations of selfadjoint operators in dimension one}}, Int. Math. Res.
  Not. IMRN \textbf{7} (2018), 2156--2207.

\bibitem{RV08}
M.~Rudelson and R.~Vershynin, \emph{The least singular value of a randomsquare
  matrix is ${O}(n^{-1/2})$}, C. R. Math. Acad. Sci. Paris \textbf{346} (2008),
  893--896.

\bibitem{SaTeSp06}
A.~Sankar, D.A. Spielmann, and S.H. Teng, \emph{Smoothed analysis of the
  condition numbers and growth factors of matrices}, SIAM J, Matrix Anal. Appl.
  \textbf{28} (2006), no.~2, 446--476.

\bibitem{Sj00}
J.~Sj{\"o}strand, \emph{{Asymptotic Distribution of Eigenfrequencies for Damped
  Wave Equations}}, Publ. RIMS, Kyoto Univ. \textbf{36} (2000), 573--611.

\bibitem{Sj03}
J.~Sj{\"o}strand, \emph{Perturbations of selfadjoint operators with periodic
  classical flow}, In: Wave Phenomena and Asymptotic Analysis. RIMS Kokyuroku
  1315 (2003).

\bibitem{Sj09}
J.~Sj{\"o}strand, \emph{{Eigenvalue distribution for non-self-adjoint operators
  with small multiplicative random perturbations}}, Annales Fac.~Sci.~Toulouse
  \textbf{18} (2009), no.~4, 739--795.

\bibitem{Sj10}
\bysame, \emph{{Eigenvalue distribution for non-self-adjoint operators on
  compact manifolds with small multiplicative random perturbations}}, Ann. Fac.
  Toulouse \textbf{19} (2010), no.~2, 277--301.

\bibitem{Sj10a}
\bysame, \emph{{Resolvent Estimates for Non-Selfadjoint Operators via
  Semigroups}}, {Around the Research of Vladimir Maz'ya III}, {International
  Mathematical Series}, no.~13, Springer, 2010, pp.~359--384.

\bibitem{SjVo15b}
J.~Sj{\"o}strand and M.~Vogel, \emph{Large bi-diagonal matrices and random
  perturbations}, J. of Spectral Theory \textbf{6} (2016), no.~4, 977--1020.

\bibitem{SjVo19b}
J.~Sj{\"o}strand and M.~Vogel, \emph{General toeplitz matrices subject to
  gaussian perturbations}, Ann. Henri Poincar\'e \textbf{22} (2021), no.~1,
  49--81.

\bibitem{SjVo19a}
\bysame, \emph{Toeplitz band matrices with small random perturbations},
  Indagationes Mathematicae \textbf{32} (2021), no.~1, 275--322.

\bibitem{SjZw91}
J.~Sj{\"o}strand and M.~Zworski, \emph{{Complex scaling and the distribution of
  scattering poles}}, Journal of the American Mathematical Society \textbf{43}
  (1991), no.~4.

\bibitem{Sn02}
P.~{\'S}niady, \emph{{Random Regularization of Brown Spectral Measure}}, J. of
  Functional Analysis \textbf{193} (2002), 291--313.

\bibitem{TaVu08}
V.~Vu T.~Tao, \emph{Random matrices: the circular law}, Commun. Contemp. Math.
  \textbf{10} (2008), no.~2, 261--307.

\bibitem{TaVu10}
T.~Tao and V.~Vu, \emph{Smooth analysis of the condition number and the least
  singular value}, Math. Comp. \textbf{79} (2010), no.~272, 2333--2352 (see
  also the Erratum arxiv.org/pdf/0805.3167v3.pdf).

\bibitem{Tr92}
L.N. Trefethen, \emph{Pseudospectra of matrices}, in Numerical Analysis 1991,
  D. F. Griffiths and G. A. Watson, eds., Longman Scientific and Technical,
  Harlow, UK, 1992, 234--266.

\bibitem{Tr97}
\bysame, \emph{{Pseudospectra of linear operators}}, SIAM Rev. \textbf{39}
  (1997), no.~3, 383--406.

\bibitem{Vo17a}
M.~Vogel, \emph{{The precise shape of the eigenvalue intensity for a class of
  non-selfadjoint operators under random perturbations}}, Ann. Henri
  Poincar{\'e} \textbf{18} (2017), no.~2, 435--517.

\bibitem{Vo20}
M.~Vogel, \emph{{Almost sure Weyl law for quantized tori}}, Comm. Math. Phys
  \textbf{378} (2020), no.~2, 1539--1585.

\bibitem{Wa22}
A.~B. Watson, T.~Kong, A.~H. MacDonald, and M.~Luskin,
  \emph{Bistritzer-macdonald dynamics in twisted bilayer graphene}, J. Math.
  Phys. \textbf{64} (2023), no.~031502.

\bibitem{We12}
H.~Weyl, \emph{Das asymptotische verteilungsgesetz der eigenwerte linearer
  partieller differentialgleichungen (mit einer anwendung auf die theorie der
  hohlraumstrahlung)}, Math. Ann. \textbf{71} (1912), no.~4, 441--479.

\bibitem{Zw01}
M.~Zworski, \emph{{A remark on a paper of E.B. Davies}}, Proc. A.M.S. (2001),
  no.~129, 2955--2957.

\bibitem{Zw01a}
\bysame, \emph{{Pseudospectra of semi-classical operators.}}, Unpublished
  leture, King's College (2001).

\bibitem{Zw12}
\bysame, \emph{{Semiclassical Analysis}}, {Graduate Studies in Mathematics
  138}, American Mathematical Society, 2012.

\end{thebibliography}
\end{document}